\numberwithin{equation}{section}
\newcommand{\beg}{\begin{equation}}
    \newcommand{\eeg}{\end{equation}}
\newcommand{\ben}{\begin{eqnarray*}}
    \newcommand{\een}{\end{eqnarray*}}
\newtheorem{thm}{Theorem}[section]
\newtheorem{cor}[thm]{Corollary}
\newtheorem{lem}[thm]{Lemma}
\numberwithin{equation}{section} \theoremstyle{definition}
\newtheorem{defn}[thm]{Definition}
\newtheorem{rem}[thm]{Remark}
\newtheorem{eg}[thm]{Example}
\makeatletter \@namedef{subjclassname@2020}{\textup{2020}
Mathematics Subject Classification} \makeatother
\begin{document}

\title[Peripheral Poisson Boundary]{ Peripheral Poisson Boundary: Extensions and Examples}

  \author[Bhat and Dias]{B V Rajarama Bhat and Astrid Swizell Dias}
    \address[Statistics and Mathematics Unit, Indian Statistical Institute, R V College Post, Bangalore - 560059, India]{}
\email{bvrajaramabhat@gmail.com}
    \address[Institute of Mathematical Sciences, Taramani, Chennai]{}
\email{astriddias97@gmail.com}

\begin{abstract}

The main purpose of this article is to explore the possibility of
extending the notion of peripheral Poisson boundary of unital completely positive (UCP)  maps to
contractive completely positive (CCP) maps and to unital and non-unital contractive
quantum dynamical semigroups on von Neumann algebras. We observe that the
theory extends easily in the setting of von Neumann algebras and
normal maps. Surprisingly, the peripheral Poisson boundary is unital, whenever it is nontrivial, even for contractive semigroups.  The strong operator limit formula for computing the extended Choi-Effros product remains intact. However, there are serious
obstacles in the framework of $C^*$-algebras, and we are unable to define the extended Choi-Effros product in such generality. We provide several intriguing examples to
illustrate this.

    \end{abstract}

    \maketitle

\section{Introduction}

Consider a normal unital completely positive (UCP) self-map $\tau $ on a von Neumann algebra $\mathcal{A}$. The space $F(\tau )$ of fixed points of $\tau $,
becomes a von Neumann algebra under a modified product called the Choi-Effros product. Taking inspiration from classical probability, M. Izumi (\cite{Izumi2002},\cite{Izumi2004})  called this algebra (or its explicit representation) as the {\em Poisson boundary} of $\tau$. This has found several  applications in operator algebra theory \cite{DP}. In quantum theory of open systems it is important to study not only individual completely positive maps but also the associated dynamics (see \cite{AL, Da, Fa, GS}). This means that we are looking at either discrete or one parameter semigroups of unital or contractive completely positive maps. To understand the dynamics of $\tau$, that is, to study the semigroup $\{\tau ^n: n\in \mathbb{Z}_+\}$, it becomes crucial to analyze the space $E(\tau )$ consisting of the linear span of peripheral eigenvectors (those with eigenvalues having modulus 1). This has been observed many times before (see \cite{RA, FOR, G}). Note that the operator space $E(\tau )$ has fixed points space $F(\tau )$ as a subspace. Bhat, Kar and Talwar \cite{BKT1} noted that the Choi-Effros product can be extended to this space and the norm closure becomes a unital $C^*$-algebra. This was called as {\em peripheral Poisson boundary} of $\tau $ and the basic properties were derived in \cite{BKT1}. Some further studies have been carried out in \cite{BBDR} and \cite{BKT2}.

The theory of peripheral Poisson boundary was introduced in \cite{BKT1} for unital 
completely positive maps (UCP) on von Neumann algebras. The main tool was the {\em minimal dilation
theory\/}  developed in \cite{Bha96}, \cite{Bha99}, \cite{Bh3}, \cite{BP95} and \cite{BS}.  One may try to extend the concept of peripheral Poisson boundary to more general settings.  Here we explore the theory for contractive completely positive maps and for one-parameter semigroups of unital/contractive completely positive maps. These settings are perfectly natural because of their importance in the quantum theory of open systems.  The extension has been carried out in Sections 3 and 4, in von Neumann algebra settings (see Theorem \ref{Main 3}, \ref{Main 4} and subsequent definitions). We see that the concept of peripheral Poisson boundary and most of the related structure theorems extend naturally without any difficulties. One of the crucial upshots is that in the contractive case, the peripheral Poisson boundary remains unital, if it is non-trivial. This is somewhat unexpected and has important connections with Perron-Frobenius theory (Theorem \ref{PP}).

The framework of peripheral Poisson boundary as developed in \cite{BKT1} has a peculiarity. It  starts with a normal CP map on a von Neumann algebra, but the peripheral Poisson boundary constructed is only a $C^*$-algebra. In fact, through an example using $\lambda $-Toeplitz operators, it was shown in \cite{BKT1} that, in general, we cannot get a von Neumann algebra. In other words, the Choi-Effros product may not extend to the strong closure of the span of peripheral eigenvectors and we have to remain satisfied with a $C^*$-algebra. However, the question remained as to whether the theory can be developed completely in the $C^*$-algebra setting, i.e, starting with a unital CP map on a $C^*$-algebra. As mentioned before, \cite{BKT1} makes extensive use of dilation theory. Therefore, the first question in reader's mind is perhaps about the existence of minimal dilation of unital CP maps to $*$-endomorphisms in $C^*$-algebra setting. Actually, this is feasible and  in fact it was discovered before the theory got developed for von Neumann algebras \cite{Bha99}. In a way, the minimal dilation constructions are smoother in $C^*$-setting as the norm topology is easier to work with.   In the von Neumann algebra setting, one had to handle the topology with more care (see Bhat and Skeide \cite{BS}). 

The conclusion of the previous paragraph is that dilation theory may not be an obstruction to develop a theory of peripheral Poisson boundary for unital CP maps on $C^*$-algebras. However, all the constructions of the Choi-Effros type product in \cite{BKT1} used the strong operator topology and didn't allow us to extend the arguments to $C^*$-algebra setting. To our astonishment, finally we have some counter examples (see Section 5), which demonstrate conclusively that in general it is impossible to make the norm closed operator system generated by peripheral eigenvectors into a $C^*$-algebra. So we need to start with the von Neumann algebra setting of normal maps even though we finally end up with the construction of only a $C^*$ algebra and not a von Neumann algebra.  The counter examples of Section 5 could be termed as elementary, but as is usual in such cases, it took considerable effort to arrive at them for the first time.

Section 2 has some background information and preliminaries. In
particular, we recall the minimal dilation theorem and the construction
of the peripheral Poisson boundary for unital completely positive maps
on von Neumann algebras.

In Section 3, we use the unitization procedure to extend the theory
of the peripheral Poisson boundary to contractive completely positive
maps on von Neumann algebras. In the next Section, the theory has been extended to one parameter
semigroups of unital completely positive maps on von Neumann
algebras and then subsequently to contractive one parameter
semigroups. Finally, in Section 5, we have examples showing the failure of peripheral Poisson boundary theory for maps on $C^*$-algebras.

\section{Preliminaries}

To begin with, we recall the definition and construction of peripheral Poisson boundary as described in \cite{BKT1}. This also helps us to set up some notations.

For $d\in \mathbb{N},$ $\mathbb{M}_d$ will denote the algebra of $d\times d$ complex matrices, considered as the space of operators on the Hilbert space $\mathbb{C}^d$ in the natural way.
Our Hilbert spaces are all complex and separable,
whose inner products, denoted by $\langle \cdot , \cdot \rangle $ are anti-linear
in the first variable. By $\mathscr{B}(\mathcal{H})$ we denote the
algebra of all bounded operators on a Hilbert space $\mathcal{H}$. The identity of a Hilbert space $\mathcal{H}$ is denoted by $1_{\mathcal{H}}$. Similarly, we may denote the identity of an algebra $\mathcal{A}$ by $1_{\mathcal{A}}.$ If the Hilbert space or the algebra under consideration is clear from the context, we may drop the subscript.

 Suppose $\mathcal{A}\subseteq \mathscr{B}(\mathcal{H})$ is a von
Neumann algebra,  and $\tau: \mathcal{A}\to \mathcal{A}$ is a normal,
unital completely positive map.   Consider the set of fixed points of $\tau $:
$$F(\tau):=\{x\in \mathcal{A}: \tau(x)=x \}.$$ 
Clearly $F(\tau )$ is an operator system, that is, it is a subspace closed under adjoints and contains the identity.  In general, it is not closed under taking products. However, it admits a new product called the Choi-Effros product and this makes $F(\tau )$ a von Neumann algebra. This is known as the {\em Poisson boundary of $\tau.$} Note that $F(\tau )$ is the eigenspace of $\tau$ with respect to eigenvalue 1. Taking a cue from this, for $\lambda \in \mathbb{T}:=\{z: z\in\mathbb{C}, |z|=1\}$, take
$$E_\lambda (\tau)=\{x \in \mathcal{A}: \tau(X)=\lambda x \}.$$  So $E_1(\tau )=F(\tau ).$ The elements of $E_{\lambda }$ with $\lambda \in\mathbb{T}$ are known as {\em peripheral eigenvectors\/} of $\tau .$ 
Define
$$E(\tau)=~\mbox{span}\{x\in \mathcal{A}: x\in E_\lambda(\tau) ~~\mbox{ for  some } \lambda \in \mathbb{T}\}.$$

In studying {\em `quantum dynamics'},\/  in discrete time, one is interested in analyzing the behavior of the discrete semigroup $\{ \tau ^n: n\in \mathbb{Z}_+\}.$ In general $F(\tau ^n)$ varies with $n$. However, an elementary application of the spectral mapping theorem shows (see \cite{BKT1}):
\begin{equation}
E(\tau ^n)= E(\tau ), ~~\forall n\geq 1.
\end{equation}
Note that $\tau $ being continuous, $F(\tau )$ is closed. However, this need not be the case with $E(\tau ).$ Take 
$$\mathcal{P}(\tau ) =\overline{E(\tau )}^{\|\cdot \|},$$
the norm closure of $E(\tau ).$ 

Given a triple $(\mathcal{H}, \mathcal{A}, \tau )$ as above, there exists a triple $(\mathcal{K}, \mathcal{B}, \theta )$ called  the {\em minimal dilation} of $\tau$, where (i) $\mathcal{K}$ is a Hilbert space containing $\mathcal{H}$, (ii) $\mathcal{B}\subseteq \mathscr{B}(\mathcal{K})$ is a von Neumann algebra  satisfying $\mathcal{A}= p\mathcal{B}p$, where $p$ is the projection of $\mathcal{K}$ onto $\mathcal{H}$,  and (iii) $\theta : \mathcal{B}\to \mathcal{B}$, is a unital $*$-endomorphism `dilating' $\tau$:
\begin{equation}\label{dilation}\tau ^n(x)=p\theta ^n(x)p, ~~n\geq 0, x\in \mathcal{A}.\end{equation}
Here and elsewhere in this dilation theory, the operators $x\in \mathscr{B}(\mathcal{H})$ are identified with $pxp$ in $\mathscr{B}(\mathcal{K})$ for $\mathcal{H}\subseteq \mathcal{K}$. The exact defining properties of this dilation that make it unique up to unitary equivalence will be described in the next section in a more general setting. One of the consequences of these properties in the unital case is that $\{\theta^n(p):n\in \mathbb{Z}_+\} $ is an increasing family of projections that increases to identity as $n$ tends to infinity. 

As $\theta $ is also a completely positive map in its own right, we already have $E_{\lambda }(\theta ), E(\theta ), \mathcal{P}(\theta )$ well defined by simply replacing $\tau $ by $\theta.$ One of the major observations of \cite{BKT1} is that every peripheral eigenvector of $\tau$ `lifts' uniquely to a peripheral eigenvector of $\theta$, and this lift can be extended to the
whole of $\mathcal{P}(\tau )$.  More specifically,  given $x\in \mathcal{P}(\tau )$, there exists a unique $\widehat{x}$ in $\mathcal{P}(\theta )$ such that 
\begin{equation}\label{lift}x=p\widehat{x}p.\end{equation}
There is an explicit formula for the lift $\widehat{x}$. If $x\in E_\lambda(\tau )$ (with $\lambda \in \mathbb{T}$),
\begin{equation}\label{lift formula}\widehat{x} = ~\mbox{s.}\lim _{n\to \infty}\frac{\theta ^n(x)}{\lambda ^n},\end{equation}
where `s.' indicates that the limit is taken in strong operator topology. The mapping $x\mapsto \widehat{x}$ from $\mathcal{P}(\tau )$ to $\mathcal{P}(\theta )$ is a bijection. In fact,   it is a complete isometric isomorphism of operator systems. The inverse map of this is nothing but the compression map $z\mapsto pzp$, restricted to $\mathcal{P}(\theta ).$  Now $\theta $ being a unital $*$-endomorphism, $\mathcal{P}(\theta )$ is seen to be a unital $C^*$ subalgebra of $\mathcal{B}$.  This allows us to define a product, denoted `$\circ $', on $\mathcal{P}(\tau )$, by taking
\begin{equation}\label{product}x\circ y = p(\widehat{x}.\widehat{y})p.\end{equation}
This is called the extended Choi-Effros product. With this product $\mathcal{P}(\tau )$ is a unital $C^*$-algebra and  $(\mathcal{P}(\tau ), \circ )$ is defined as the {\em peripheral Poisson boundary\/} of $\tau $. It is important to note that $\tau $ leaves $\mathcal{P}(\tau )$ invariant and $\tau $ restricted to $\mathcal{P}(\tau )$ is an automorphism of $(\mathcal{P}(\tau), \circ ).$  For $x\in E_{\lambda }(\tau ),  y\in E_{\mu } (\tau )$ where $\lambda, \mu \in \mathbb{T}$, 
we have the formula: \begin{equation}\label{limit formula} x\circ y = ~\mbox{s.}\lim _{n\to \infty}\frac{\tau ^n (xy)}{\lambda ^n\mu ^n }.\end{equation}
In principle, we could have used this to define the extended Choi-Effros product. However, as of now, we do not know how to prove the existence of these strong limits without using dilation theory. 

\section{CCP maps on von Neumann algebras}

The primary goal of this Section is to extend the concept of peripheral
Poisson boundary to contractive (non-unital) completely
positive maps.  We begin with recalling the minimal dilation theorem for contractive completely positive maps on von Neumann algebras. This includes the unital case as a special case.  

\begin{thm}\label{Minimal} {\em (\cite{Bha96}, \cite{Bha99}, \cite{BS} )}
Let $(\mathcal{H}, \mathcal{A}, \tau )$ be a triple where $\mathcal{H}$ is a Hilbert space, $\mathcal{A}
\subseteq \mathscr{B}(\mathcal{H})$, is a von Neumann algebra and $\tau:\mathcal{A}\to \mathcal{A}$ is a normal contractive
completely positive map. Then there exists a triple
$(\mathcal{K}, \mathcal{B}, \theta )$ with following properties:
\begin{itemize}
\item[(i)]  $\mathcal{K}$ is a Hilbert space such that $\mathcal{H}\subseteq \mathcal{K}.$
\item[(ii)] $\mathcal{B}\subseteq \mathscr{B}(\mathcal{K})$ is a von
Neumann algebra such that $\mathcal{A}=p\mathcal{B}p$ where $p$ is
the orthogonal projection of $\mathcal{K}$ to $\mathcal{H}.$
\item[(iii)] $\theta :\mathcal{B}\to \mathcal{B}$ is a normal 
$*$-endomorphism.
\item[(iv)] $\tau ^n(x)= p\theta ^n(x)p, ~~\forall x\in \mathcal{A},
n\in \mathbb {Z}_+.$
\item[(v)] $ \mathcal{K}= \overline{span}\{ \theta^n(x_n)\theta^{n-1}(x_{n-1})...\theta(x_1)h: h\in \mathcal{H}, \  x_1, ...,x_n \in \mathcal{A}, \ 
 n\geq 0 \}. $
\item[(vi)] $\mathcal{B}$ is the von Neumann algebra generated by $\{\theta^n(x): n\in \mathbb{Z}_+, x\in \mathcal{A} \}$.
\end{itemize}
\end{thm}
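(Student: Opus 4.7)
The plan is to reduce the CCP case to the known minimal dilation theorem for UCP maps (\cite{Bha96, Bha99, BP95, BS}) via a unitization trick, and then carve out the desired triple from the larger unital one.

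First I would unitize. Set $\hat{\mathcal{H}}=\mathcal{H}\oplus\mathbb{C}$, let $p\in\mathscr{B}(\hat{\mathcal{H}})$ be the orthogonal projection onto $\mathcal{H}$, and put $q=1_{\hat{\mathcal{H}}}-p$. Embedding $\mathcal{A}\hookrightarrow\mathscr{B}(\hat{\mathcal{H}})$ via $x\mapsto x\oplus 0$, take $\hat{\mathcal{A}}=\mathcal{A}+\mathbb{C}\,q$, a von Neumann algebra with unit $1_{\hat{\mathcal{H}}}$. Define $\hat{\tau}:\hat{\mathcal{A}}\to\hat{\mathcal{A}}$ by
\[
\hat{\tau}(x+\lambda q)=\tau(x)+\lambda(p-\tau(p))+\lambda q,\qquad x\in\mathcal{A},\ \lambda\in\mathbb{C}.
\]
Using $p-\tau(p)\ge 0$ (contractivity of $\tau$) together with the direct-sum decomposition $\hat{\mathcal{A}}\cong\mathcal{A}\oplus\mathbb{C}$, one checks that $\hat{\tau}$ is a normal UCP map. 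Applying the UCP minimal dilation theorem to $(\hat{\mathcal{H}},\hat{\mathcal{A}},\hat{\tau})$ yields a minimal dilation $(\hat{\mathcal{K}},\hat{\mathcal{B}},\hat{\theta})$ with projection $\hat{p}:\hat{\mathcal{K}}\to\hat{\mathcal{H}}$.

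Second, I would descend from the unital dilation to one for $\tau$. Define
\[
\mathcal{K}=\overline{\operatorname{span}}\{\hat{\theta}^n(x_n)\hat{\theta}^{n-1}(x_{n-1})\cdots\hat{\theta}(x_1)h\,:\, h\in\mathcal{H},\ x_1,\dots,x_n\in\mathcal{A},\ n\ge 0\}\subseteq\hat{\mathcal{K}},
\]
which contains $\mathcal{H}$, and let $P$ denote the projection of $\hat{\mathcal{K}}$ onto $\mathcal{K}$. Take $\mathcal{B}$ to be the von Neumann subalgebra of $\mathscr{B}(\mathcal{K})$ generated by $\{\hat{\theta}^n(x)|_{\mathcal{K}}:\,n\ge 0,\ x\in\mathcal{A}\}$, and set $\theta=\hat{\theta}|_{\mathcal{B}}$. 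Properties (i), (ii), (v), (vi) are then immediate from the construction. Property (iv) follows by restricting $\hat{\tau}^n(a)=\hat{p}\,\hat{\theta}^n(a)\,\hat{p}$ to $a=x\in\mathcal{A}$: since $x$ vanishes on $\operatorname{ran} q$, compression through $\hat{p}$ coincides with compression through $p$, giving $\tau^n(x)=p\,\theta^n(x)\,p$. Normality of $\theta$, property (iii), follows from normality of $\hat{\theta}$ and the ultraweak closedness of $\mathcal{B}$.

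The main obstacle I anticipate is verifying that $\mathcal{K}$ is invariant under $\hat{\theta}(x)$ for every $x\in\mathcal{A}$, so that $\mathcal{B}$ is genuinely $\hat{\theta}$-stable and $\theta$ really is a $*$-endomorphism, not merely a CP map into the larger algebra $\hat{\mathcal{B}}$. This invariance rests on two facts: $\mathcal{A}=p\hat{\mathcal{A}} p$ is a corner of $\hat{\mathcal{A}}$, and the defect correction $p-\tau(p)$ in the definition of $\hat{\tau}$ is itself supported on $\mathcal{A}$. Establishing it requires tracking products of the form $\hat{\theta}^m(y)\hat{\theta}^n(x_n)\cdots\hat{\theta}(x_1)$ and using the endomorphism property of $\hat{\theta}$ to rewrite them in the generating form used to define $\mathcal{K}$. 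Once this is done, the minimality of $(\hat{\mathcal{K}},\hat{\mathcal{B}},\hat{\theta})$ transfers to give (v), (vi), and uniqueness up to unitary equivalence for the constructed CCP dilation.
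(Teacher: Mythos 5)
Your proposal follows essentially the same route as the paper: the paper treats this as a cited result (\cite{Bha96}, \cite{Bha99}, \cite{BS}) and outlines precisely your unitization $\tilde{\tau}(x\oplus c)=[\tau(x)+c(1-\tau(1))]\oplus c$, followed by applying the UCP minimal dilation theorem and cutting down. The only cosmetic difference is that the paper carves out $\mathcal{K}$ as $(0\oplus\mathbb{C})^\perp$ inside $\tilde{\mathcal{K}}$ rather than as the subspace generated by $\mathcal{A}$ acting on $\mathcal{H}$, and defers the verification you flag (invariance of $\mathcal{K}$ and the splitting $\widetilde{\mathcal{B}}=\mathcal{B}\oplus\mathbb{C}$) to \cite{Bha99}.
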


We call $(\mathcal{K}, \mathcal{B}, \theta )$ as the minimal dilation of the triple $(\mathcal{H}, \mathcal{A}, \tau )$. The action of $\theta ^n$ on $x$ in (iv) is meaningful as following our convention $x$ is identified with $pxp$. The minimal dilation is unique up to unitary equivalence. This theorem is well-known in the unital case, and it was first proved when the algebra is $B(\mathcal{H} )$ in \cite{Bha96} and in the $C^*$- algebra case in \cite{Bha99}. For the von Neumann algebra situation, we refer to \cite{BS}. Most of the focus in these papers are on the continuous case consisting of one parameter semigroups of completely positive maps, and this dilation theorem has found extensive applications in the construction of $E_0$-semigroups  using CP semigroups \cite{Powers}. The discrete case of powers of a single map follows much more easily and is often treated as a first step for the continuous situation. Alternative presentations in the von Neumann algebra setting can be seen in \cite{DP} and \cite{Sawada}.
A different perspective using von Neumann correspondences is present in \cite{MS}. 

The extension of the minimal dilation theorem to the contractive case is easy through a unitization trick, and this has been detailed in \cite{BP95} and \cite{Bha99}. For the reader's convenience, we outline this procedure here.

Let $\mathcal{A}$ be a von Neumann algebra and let
$\tau:\mathcal{A}\to \mathcal{A}$ be a normal contractive completely
positive map.  Note that contractivity of $\tau $ implies that
$\tau(1) \leq 1$. As the unital case has already been dealt-with before, for the sake of clarity, in the following we assume $\tau (1)\neq 1.$  Consider the von Neumann algebra
$\tilde{\mathcal{A}}:=\mathcal{A}\oplus\mathbb{C}$ acting on the Hilbert space $\tilde{\mathcal{H}}:=\mathcal{H}\oplus \mathbb{C}$ in the natural way. Define
$\tilde{\tau }:\tilde{\mathcal{A}} \to \tilde{ \mathcal{A}}$
as follows:
$$ \tilde{\tau }(x\oplus c) = [\tau(x)+ c(1-\tau (1))]\oplus c, ~~ x\oplus c\in \mathcal{A}\oplus \mathbb{C}.
$$
We consider $\tilde{\tau } $ as `unitization'  of $\tau .$ Note that $(\tilde {\tau })^n(x\oplus 0)=\tau^n(x)\oplus 0$. It is easy to see
that $\tilde{\tau }$ is a unital completely positive extension of $\tau$. Now
the minimal dilation of $\tau $ can be got by considering the
minimal dilation of $\tilde{\tau } $  and restricting the attention to a portion of it  as follows (see \cite{Bha99}). 

If $(\tilde{\mathcal{K}}, \widetilde{\mathcal{B}}, \tilde{\theta })$ is the minimal dilation of $(\tilde{\mathcal{H}}, \tilde{\mathcal{A}}, \tilde{\tau })$, $\tilde{\mathcal{H}}=\mathcal{H}\oplus \mathbb{C}$ is a subspace of the Hilbert space $\tilde{\mathcal{K}}$. Take $\mathcal{K}$ as $(0\oplus \mathbb{C})^\perp $ in $\tilde{\mathcal{K}}.$  
Then we have the natural decomposition $\tilde{\mathcal{K}}=\mathcal{K}\oplus \mathbb{C}$. It turns out that in a natural way,  $\widetilde{\mathcal{B}}
=\mathcal{B}\oplus \mathbb{C}$, and $$\tilde{\theta }(b\oplus c)=[\theta (b)+c(1-\theta (1))]\oplus c,~~b\oplus c\in \mathcal{B}\oplus \mathbb{C},$$ on defining $\mathcal{B}$ and $ \theta$ appropriately.  Then
$(\mathcal{K}, \mathcal{B}, \theta )$ is the minimal dilation of $(\mathcal{H}, \mathcal{A}, \tau )$, satisfying all the conditions of Theorem \ref{Minimal}.  In other words, the minimal dilation theorem holds for contractive completely positive maps where the dilated maps are  general (not-necessarily unital) normal $*$-endomorphisms. In the unital case, $\theta ^n(p)$ increases to the identity map on the dilation space $\mathcal{K}$ as $n$ increases to infinity. In the non-unital case, $\theta ^n(p)$ is not increasing and $\theta$ is non-unital.

In the following, let $(\mathcal{K}, \mathcal{B},\theta)$ be the minimal dilation
of $(\mathcal{H}, \mathcal{A}, \tau)$ in the von Neumann algebra setting as above for a contractive non-unital completely positive map and let $p$ be the projection of $\mathcal{K}$ onto $\mathcal{H}$.  Let us see the relationship between the peripheral eigenvectors  of $\tau $
and its unital extension $\tilde{\tau } .$  Note that for $\lambda \neq 1$, if
$\tilde{\tau} (x\oplus c)=\lambda(x\oplus c)$. Then $\lambda (x\oplus c)=[\tau(x)+
c(1-\tau(1))]\oplus c$, and this implies $c=0$. Therefore, for $\lambda \neq
1$, the eigenvectors of $\tilde{\tau }$ essentially coincide with that of
$\tau$.

Notice that if $y$ is
a fixed point of $\tau$, then $y\oplus 0$ is a fixed point of $\tilde{\tau }$.  Also $1_{\mathcal{H}}\oplus 1$ is anyway a fixed point of $\tilde{\tau }$. It is easy to see that all fixed points of $\tilde{\tau }$ are in the linear span of such vectors. 
Consequently, \begin{equation}\label{eigenvectors}
E(\tilde{\tau}) =\mbox{span} (\{y\oplus 0: y\in E(\tau )\}\bigcup \{1_{\mathcal{H}}\oplus 1\}).
\end{equation}
In other words, the peripheral eigenvectors of $\tilde{\tau}$ are those of $\tau$ supplemented by $1_{\mathcal{H}}\oplus 1.$

Now we extend the lifting property of peripheral eigenvectors to the non-unital case. We recall that for all $n \in \mathbb{Z_+}$,
$1_{\mathcal{H}}\oplus1 \leq (\tilde{\theta}) ^n(1_{\mathcal{H}}\oplus 1)$ and $(\tilde{\theta}) ^n(1_{\mathcal{H}}\oplus 1) \uparrow
1_{\tilde{\mathcal{K}}}$ in strong operator topology. 
Note that $\tau(1_{\mathcal{H}})\neq 1_{\mathcal{H}}$, ensures that $\theta (1_{\mathcal{K}})\neq 1_{\mathcal{K}} $ and therefore $1_{\mathcal{K}}$ is not a fixed point of $\theta $.

\begin{lem} Suppose  $0\neq x \in \mathcal{A}$ is a peripheral eigenvector of $\tau $, that is,  $\tau(x)=\lambda x$ for some  $\lambda \in \mathbb{T} $. 
Then there exists a unique $\widehat{x}\in \mathcal{B}$, such that $\theta(\widehat{x})=\lambda \widehat{x}$, satisfying $p\widehat{x}p=x$. Moreover,   $\widehat{x}$ is given by \begin{equation}\label{lifting} \widehat{x} = s. \lim_{n \to \infty} \frac{\theta^n(x)}{\lambda^n}.\end{equation}
\end{lem}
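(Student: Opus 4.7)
The plan is to reduce the lemma to the unital case by passing to the unitization $\tilde{\tau}$ of $\tau$ constructed above. Given $x \in \mathcal{A}$ with $\tau(x) = \lambda x$ and $\lambda \in \mathbb{T}$, the defining formula for $\tilde{\tau}$ yields $\tilde{\tau}(x \oplus 0) = \tau(x) \oplus 0 = \lambda(x \oplus 0)$, so $x \oplus 0$ is a peripheral eigenvector of the unital map $\tilde{\tau}$ on $\tilde{\mathcal{A}}$ with the same eigenvalue $\lambda$. The lifting result for the unital case, recalled in \eqref{lift} and \eqref{lift formula}, then produces a unique $z \in \widetilde{\mathcal{B}}$ satisfying $\tilde{\theta}(z) = \lambda z$ and $\tilde{p} z \tilde{p} = x \oplus 0$, and moreover $z = \text{s.}\lim_{n \to \infty} \tilde{\theta}^n(x \oplus 0)/\lambda^n$, where $\tilde{p}$ is the projection of $\tilde{\mathcal{K}}$ onto $\tilde{\mathcal{H}}$.

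Next, I would transfer this back into the structure $(\mathcal{K}, \mathcal{B}, \theta)$ using the identifications $\tilde{\mathcal{K}} = \mathcal{K} \oplus \mathbb{C}$, $\widetilde{\mathcal{B}} = \mathcal{B} \oplus \mathbb{C}$, and correspondingly $\tilde{p} = p \oplus 1$. The key observation is that $\tilde{\theta}$ acts on elements of the form $b \oplus 0$ simply by $\tilde{\theta}(b \oplus 0) = \theta(b) \oplus 0$, so an easy induction gives $\tilde{\theta}^n(x \oplus 0) = \theta^n(x) \oplus 0$ for every $n$. Passing to the strong operator limit, the second coordinate remains zero, hence $z = \widehat{x} \oplus 0$ for some $\widehat{x} \in \mathcal{B}$ given precisely by $\widehat{x} = \text{s.}\lim_{n \to \infty} \theta^n(x)/\lambda^n$. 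Extracting coordinates from the identities $\tilde{\theta}(z) = \lambda z$ and $\tilde{p} z \tilde{p} = x \oplus 0$ then yields $\theta(\widehat{x}) = \lambda \widehat{x}$ and $p \widehat{x} p = x$, which are exactly the required properties.

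For uniqueness, any $y \in \mathcal{B}$ with $\theta(y) = \lambda y$ and $p y p = x$ produces a candidate lift $y \oplus 0 \in \widetilde{\mathcal{B}}$ of $x \oplus 0$ in the unital setting, so the uniqueness clause of the unital lifting result forces $y \oplus 0 = \widehat{x} \oplus 0$, and therefore $y = \widehat{x}$. No serious obstacle arises; the only point requiring care is to check that the strong limit in $\widetilde{\mathcal{B}}$ actually lies in the summand $\mathcal{B} \oplus 0$, which is immediate once one notes that this property is preserved by every iterate $\tilde{\theta}^n(x \oplus 0)$ and hence passes to the SOT limit.
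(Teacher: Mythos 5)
Your proposal is correct and follows essentially the same route as the paper: reduce to the unital case via the unitization $\tilde{\tau}$, observe that $\tilde{\theta}^n(x\oplus 0)=\theta^n(x)\oplus 0$ so the unital lift of $x\oplus 0$ has the form $\widehat{x}\oplus 0$, and deduce both the limit formula and uniqueness from the unital lifting theorem. No gaps.
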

\begin{proof} Without loss of generality we may assume that the dilation is got through the unitization procedure described above.
Then  $x\oplus 0 $ is a peripheral eigenvector of $\tilde{\tau }$ and by (\ref{lift formula}) it has unique lifting
$\widehat{x\oplus 0}$ in $\tilde{K}$, satisfying $\tilde{\theta}(\widehat{x\oplus 0})=\lambda (\widehat{x\oplus 0})$ and is given by
$$\widehat{x\oplus 0}= s.\lim _{n\to \infty}\frac{(\tilde{\theta})^n(x\oplus 0)}{\lambda ^n}.$$
Since $(\tilde{\theta})^n(x\oplus 0) =\theta ^n(x)\oplus 0$, it follows that $\widehat{x\oplus 0}$ is of the form $\widehat{x}\oplus 0$, for some $\widehat{x}$ 
and  also (\ref{lifting}) holds.

 If $y\in \mathcal{B}$ and $\theta (y) =\lambda y$ then $\tilde{\theta }(y\oplus 0)= \lambda (y\oplus 0)$, and $y\oplus 0$ compresses to an element of the form $x\oplus 0$.
 Hence the uniqueness also follows from the uniqueness of lifting in the unital case.
\end{proof}

As in the unital case the compression map $z\mapsto pzp$ becomes the inverse of the lifting map, and we have the following Theorem.
   
\begin{thm}\label{Main 3}
Let $\mathcal{A} \subseteq \mathscr{B}(\mathcal{H})$ be a von Neumann
algebra and $\tau:\mathcal{A}\to \mathcal{A}$ be a normal
contractive completely positive map. Let
$(\mathcal{K},\mathcal{B},\theta)$ be the minimal dilation of $\tau$ as above and let $p$ be the projection of $\mathcal{K}$ to $\mathcal{H}.$ Then the compression map $z\mapsto pzp$ from $\mathcal{B}$ to $\mathcal{A}$ restricted to $\mathcal{P}(\theta )$ is a complete isomorphism between $\mathcal{P}(\theta )$ and $\mathcal{P}(\tau ). $
\end{thm}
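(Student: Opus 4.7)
The plan is to reduce everything to the unital case already established in \cite{BKT1}, applied to the unital triple $(\tilde{\mathcal{H}},\tilde{\mathcal{A}},\tilde{\tau})$ with minimal dilation $(\tilde{\mathcal{K}},\tilde{\mathcal{B}},\tilde{\theta})$. That case produces a complete isometric operator-system isomorphism between $\mathcal{P}(\tilde{\theta})$ and $\mathcal{P}(\tilde{\tau})$ with inverse the compression by $\tilde{p}=p\oplus 1_{\mathbb{C}}$. Since the embeddings $\mathcal{A}\hookrightarrow\tilde{\mathcal{A}}$, $x\mapsto x\oplus 0$, and $\mathcal{B}\hookrightarrow\tilde{\mathcal{B}}$, $b\mapsto b\oplus 0$, are $\ast$-monomorphisms (hence complete isometries) which intertwine the two dynamics, I expect the statement to transfer cleanly once the compatibility of the lifts with these embeddings is in place.

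For a peripheral eigenvector $x\in E_\lambda(\tau)$, the preceding Lemma provides a unique $\widehat{x}\in E_\lambda(\theta)\subseteq\mathcal{P}(\theta)$ with $p\widehat{x}p=x$; linear extension yields a map $\Psi:E(\tau)\to E(\theta)$. Since $(\tilde{\theta})^n(x\oplus 0)=\theta^n(x)\oplus 0$, formula (\ref{lift formula}) applied in the unital setting gives that $\Psi(x)\oplus 0$ coincides with the canonical lift $\widehat{x\oplus 0}\in\mathcal{P}(\tilde{\theta})$. Consequently, for $[x_{ij}]\in M_n(E(\tau))$,
\[
\|[\Psi(x_{ij})]\|_{M_n(\mathcal{B})}=\|[\Psi(x_{ij})\oplus 0]\|_{M_n(\tilde{\mathcal{B}})}=\|[x_{ij}\oplus 0]\|_{M_n(\tilde{\mathcal{A}})}=\|[x_{ij}]\|_{M_n(\mathcal{A})},
\]
where the middle equality is the complete isometry from the unital case. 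Thus $\Psi$ is a complete isometry on $E(\tau)$ and extends uniquely by norm continuity to a complete isometry $\Psi:\mathcal{P}(\tau)\to\mathcal{P}(\theta)$; its image lies in $\mathcal{P}(\theta)$ because it is a norm limit of elements of $E(\theta)$.

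For the reverse direction, the compression $\Phi:z\mapsto pzp$ is well-defined from $\mathcal{P}(\theta)$ into $\mathcal{P}(\tau)$: for $z\in E_\lambda(\theta)$ one has $z\oplus 0\in E_\lambda(\tilde{\theta})\subseteq\mathcal{P}(\tilde{\theta})$, whence $\tilde{p}(z\oplus 0)\tilde{p}=(pzp)\oplus 0\in\mathcal{P}(\tilde{\tau})$ by the unital case; approximating $(pzp)\oplus 0$ in norm by elements $(y_n+c_n 1_{\mathcal{H}})\oplus c_n\in E(\tilde{\tau})$ with $y_n\in E(\tau)$ forces $c_n\to 0$ and hence $y_n\to pzp$, so $pzp\in\mathcal{P}(\tau)$, and a continuity argument covers all of $\mathcal{P}(\theta)$. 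The identities $\Phi\circ\Psi=\mathrm{id}_{\mathcal{P}(\tau)}$ and $\Psi\circ\Phi=\mathrm{id}_{\mathcal{P}(\theta)}$ then reduce to the corresponding identities in the unital case together with the uniqueness in the Lemma. As $\Phi$ is automatically completely positive (compressions always are), so is its inverse $\Psi$, which gives the required complete isomorphism. The only genuinely delicate point is the compatibility $\widehat{x\oplus 0}=\widehat{x}\oplus 0$ between the unital and non-unital lifts; the preceding Lemma has already extracted this, so what remains is bookkeeping across the direct-sum decomposition $\tilde{\mathcal{K}}=\mathcal{K}\oplus\mathbb{C}$.
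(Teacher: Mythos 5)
Your proposal is correct and follows essentially the same route as the paper, which likewise reduces to the unital case of \cite{BKT1} via the unitization $(\tilde{\mathcal{H}},\tilde{\mathcal{A}},\tilde{\tau})$ and the compatibility $\widehat{x\oplus 0}=\widehat{x}\oplus 0$ from the preceding Lemma. You have simply written out the bookkeeping (the matrix-norm transfer and the check that compression lands in $\mathcal{P}(\tau)$) that the paper leaves implicit.
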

\begin{proof}
Making use of the unitization of $\tau $ and the lift of peripheral eigenvectors described above, the proof is exactly as in the unital case \cite{BKT1}, although $\mathcal{P}(\theta )$  does not contain the unit of $\mathcal{B}$ unless $\tau$ is unital.
\end{proof}

Now, as before, we set $x\circ y= p\widehat{x}.\widehat{y}p$ for $x,y$ in $\mathcal{P}(\tau )$ and this makes
$\mathcal{P}(\tau )$ a $C^*$-algebra.

\begin{defn} Let $\mathcal{A} \subseteq \mathscr{B}(\mathcal{H})$ be a
von Neumann algebra, and $\tau:\mathcal{A} \to \mathcal{A}$ be a 
normal contractive completely positive map. Then the $C^*$- algebra
$\mathcal{P}(\tau)$ constructed above is called the {\em peripheral Poisson boundary}  of
$\tau$ and the product $`\circ '$ is called the {\em extended Choi-Effros product}. \end{defn}

The following result follows exactly as in the unital case, and so we
omit the proof.

\begin{rem}
For $x,y \in \mathcal{A}$, with $\tau(x)=\lambda x , \tau(y)=\mu y,$ where $\lambda, \mu \in \mathbb{T}$,

      $$ x\circ y= s.\lim_{n \to \infty} \frac{\tau^n(xy)}{\lambda^n\mu^n}.$$
\end{rem}

\begin{rem}
  If the von Neumann algebra $\mathcal{A}$ is abelian, then the peripheral Poisson boundary of $\tau:\mathcal{A} \to \mathcal{A}$ is also abelian.
\end{rem}
\begin{proof} Clear from the previous Remark.
\end{proof}

\begin{rem}
  Let $\tau:\mathcal{A}\to \mathcal{A}$ be a normal, contractive, completely positive map. Then $x \to \tau(x)$ is an automorphism of the peripheral Poisson boundary $(\mathcal{P}(\tau), \circ )$.
\end{rem}
\begin{proof}
This follows from the Theorem \ref{Main 3} and the fact that $\theta $ is an automorphism of $E(\theta ).$ (See the proof of Theorem 2.12 in \cite{BKT1} and observe that it does not require unitality of $\theta $).
\end{proof}

When $\tau $ is non-unital, it is possible that it has no peripheral eigenvectors. In such a case $\mathcal{P}(\tau )$ is just the trivial $C^*$-algebra $\{0\}.$ If $\tau $ does admit some peripheral eigenvectors then $(\mathcal{P}(\tau ), \circ )$ is a non-trivial $C^*$-algebra. It is interesting that it is always unital. This is not obvious. It also raises the question as to what is the unit. Obviously it has to be a special fixed point. This connects to the theory of Perron-Frobenius eigenvectors.

\begin{defn}
Let $\tau $ be a positive map on a von Neumann algebra $\mathcal{A}$. Then $x\in \mathcal{A}$ is said to be a {\em Perron-Frobenius eigenvector\/} of $\tau ,$ if $x\geq 0, \|x\|=1$, and $\tau (x)= rx$, where $r$ is the spectral radius of $\tau$. It is said to be a {\em maximal Perron-Frobenius eigenvector\/}  if $y\leq x$ for every Perron-Frobenius eigenvector $y$ of $\tau$.
\end{defn}
Under some restrictive conditions on the CP map such as irreducibility, the Perron-Frobenius eigenvector is unique. This is not the case, in general. However, it is obvious that whenever a maximal Perron-Frobenius eigenvector exists, it is unique.

\begin{lem}
    Let $\tau$ be a normal contractive completely positive map on a von Neumann algebra $\mathcal{A}\subseteq \mathscr{B}(\mathcal{H})$. Let $(\mathcal{K},\mathcal{B},\theta)$ be the minimal dilation for $\tau$ as above. Then $\theta(1_{\mathcal{K}}-p)\leq (1_{\mathcal{K}}-p)$.
\end{lem}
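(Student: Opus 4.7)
The plan is to reduce the claim to the unital case by invoking the unitization procedure recalled earlier in this section. Let $\tilde\tau$ denote the unital extension of $\tau$, and let $(\tilde{\mathcal{K}}, \tilde{\mathcal{B}}, \tilde\theta)$ be its minimal dilation, so that $\tilde{\mathcal{K}} = \mathcal{K} \oplus \mathbb{C}$, $\tilde{\mathcal{B}} = \mathcal{B} \oplus \mathbb{C}$, and $\tilde\theta(b \oplus c) = [\theta(b) + c(1 - \theta(1))] \oplus c$. The orthogonal projection of $\tilde{\mathcal{K}}$ onto $\tilde{\mathcal{H}} = \mathcal{H} \oplus \mathbb{C}$ is $\tilde p = p \oplus 1$, and the paragraph preceding the lemma already records that $\tilde p \leq \tilde\theta(\tilde p)$, since $\tilde\theta^n(\tilde p)$ is an increasing sequence of projections in the unital dilation.

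Next I would compute both sides of the desired inequality inside the enlarged algebra. Because $\tilde\theta$ is unital, the inequality $\tilde p \leq \tilde\theta(\tilde p)$ is equivalent to $\tilde\theta(1_{\tilde{\mathcal{K}}} - \tilde p) \leq 1_{\tilde{\mathcal{K}}} - \tilde p$. On the other hand, $1_{\tilde{\mathcal{K}}} - \tilde p = (1_{\mathcal{K}} - p) \oplus 0$, and plugging this into the explicit formula for $\tilde\theta$ gives
\begin{equation*}
\tilde\theta\bigl((1_{\mathcal{K}} - p) \oplus 0\bigr) \;=\; \theta(1_{\mathcal{K}} - p) \oplus 0.
\end{equation*}
Comparing these two expressions in $\tilde{\mathcal{B}} = \mathcal{B} \oplus \mathbb{C}$ yields $\theta(1_{\mathcal{K}} - p) \leq 1_{\mathcal{K}} - p$, which is the claim.

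This is essentially a bookkeeping argument, so I do not expect a serious obstacle. The only point requiring care is that one must exploit the unitality of $\tilde\theta$ even though $\theta$ itself is non-unital: the extra summand $\mathbb{C}$ in $\tilde{\mathcal{B}}$ absorbs the defect $1_{\mathcal{K}} - \theta(1_{\mathcal{K}})$, which is exactly what allows the inequality $\theta(1) - \theta(p) \leq 1 - p$ to hold despite $\theta(1) \neq 1$. Equivalently, one may give a direct proof by noting that $\theta(p) \mathcal{K} \subseteq \mathcal{K}$ follows from the formula $\tilde\theta(p \oplus 0) = \theta(p) \oplus 0$ together with the invariance of $\mathcal{K}$ under $\tilde\theta$, but phrasing it as above keeps the argument parallel to the unital case treated in \cite{BKT1}.
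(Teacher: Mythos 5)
Your proposal is correct and follows essentially the same route as the paper: both arguments pass to the unitization $\tilde\theta$ on $\mathcal{B}\oplus\mathbb{C}$, invoke the unital-case fact $\tilde p\leq\tilde\theta(\tilde p)$ with $\tilde p=p\oplus 1$, and read off the claim from the explicit formula for $\tilde\theta$. The only cosmetic difference is that the paper computes $\tilde\theta(p\oplus 1)=(\theta(p)+1_{\mathcal{K}}-\theta(1_{\mathcal{K}}))\oplus 1$ and rearranges, whereas you apply $\tilde\theta$ directly to $1_{\tilde{\mathcal{K}}}-\tilde p=(1_{\mathcal{K}}-p)\oplus 0$ using unitality of $\tilde\theta$; these are the same computation.
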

\begin{proof}
    Let $(\Tilde{\mathcal{K}},\Tilde{\mathcal{B}}, \Tilde{\theta})$ be the minimal dilation of unitization of $\tau$ : $(\Tilde{\mathcal{H}},\Tilde{\mathcal{A}},\Tilde{\tau})$, where $\Tilde{\mathcal{H}}=\mathcal{H}\oplus \mathbb{C}, \ \mbox{and} \ \Tilde{\mathcal{A}}=\mathcal{A}\oplus \mathbb{C}$ and $\Tilde{\tau}(x\oplus c)=(\tau(x)+c(1_{\mathcal{A}}-\tau(1_{\mathcal{A}})))\oplus c$. Note that $1_{\Tilde{\mathcal{K}}}=1_{\mathcal{B}}\oplus 1_{\mathbb{C}}=1_{\mathcal{K}}\oplus 1_{\mathbb{C}}=s. \lim_{n \to \infty}(\Tilde{\theta})^n (p\oplus 1_{\mathbb{C}})$, where $p $ is the projection of $\mathcal{K}$ onto $\mathcal{H}.$
Also $p\oplus 1_{\mathbb{C}} \leq \Tilde{\theta}(p\oplus 1_{\mathbb{C}})=(\theta(p)+(1_{\mathcal{K}}-\theta(1_{\mathcal{K}})))\oplus 1_{\mathbb{C}}$. Hence $\theta(1_{\mathcal{K}}-p)\leq (1_{\mathcal{K}}-p)$.
\end{proof}

\begin{thm}\label{PP} Let $\tau $ be a contractive normal completely positive map on a von Neumann algebra $\mathcal{A}$. Take $q_{\tau }:=\mbox{s-}\lim _{n\to \infty}\tau ^n(1_{\mathcal{A}}).$ Then  the following are equivalent: (i) $q_{\tau }\neq 0$; (ii) $\tau$ has a non-trivial fixed point; (iii) The peripheral Poisson boundary $\mathcal{P}(\tau )$ is non-trivial;  (iv) $\mathcal{P}(\tau )$ is a unital $C^*$-algebra. 
In such a case,  $q_{\tau }$ is the unit of $\mathcal{P}(\tau )$, and it is also the unique maximal Perron-Frobenius eigenvector of $\tau .$
\end{thm}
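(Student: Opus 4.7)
I would set up $q_\tau$ as a fixed point, run (i)$\Rightarrow$(ii)$\Rightarrow$(iii) trivially, close the loop via Kadison--Schwarz, and then establish (iii)$\Leftrightarrow$(iv) by identifying $q_\tau$ with the unit of $(\mathcal{P}(\tau),\circ)$ using the minimal dilation. Since $\tau(1_\mathcal{A})\leq 1_\mathcal{A}$ and $\tau$ is positive, $\tau^n(1_\mathcal{A})$ is a decreasing sequence of positive contractions, so $q_\tau=\mbox{s-}\lim_n\tau^n(1_\mathcal{A})$ exists and normality yields $\tau(q_\tau)=q_\tau$. Implications (i)$\Rightarrow$(ii)$\Rightarrow$(iii) are immediate since $q_\tau\in F(\tau)\subseteq\mathcal{P}(\tau)$. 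For (iii)$\Rightarrow$(i), pick a nonzero $x\in E_\lambda(\tau)$ with $\lambda\in\mathbb{T}$: Kadison--Schwarz for the 2-positive contraction $\tau^n$ gives $\tau^n(x^*x)\geq\tau^n(x)^*\tau^n(x)=x^*x$, and combined with $x^*x\leq\|x\|^2\,1_\mathcal{A}$ this yields $x^*x\leq\|x\|^2\tau^n(1_\mathcal{A})\to\|x\|^2 q_\tau$ in SOT, forcing $q_\tau\neq 0$.

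For the heart (iii)$\Rightarrow$(iv), pass to the minimal dilation $(\mathcal{K},\mathcal{B},\theta)$ and set $P_\infty:=\mbox{s-}\lim_n\theta^n(1_\mathcal{K})$. The preceding lemma iterated gives $\theta^n(1_\mathcal{K}-p)\leq 1_\mathcal{K}-p$, and a calculation in the unitization (noting $\tilde\theta^n(0\oplus 1)=(1_\mathcal{K}-\theta^n(1_\mathcal{K}))\oplus 1$ is an increasing family of projections) shows $\theta^n(1_\mathcal{K})$ is itself a decreasing family of projections in $\mathcal{B}$, so $P_\infty$ is a well-defined $\theta$-fixed projection. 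The key algebraic observation is that any peripheral eigenvector $\widehat{x}\in E_\lambda(\theta)$ satisfies $\widehat{x}=\theta(\widehat{x})/\lambda\in\theta(1_\mathcal{K})\mathcal{B}\theta(1_\mathcal{K})$ (because $\theta$, being a $*$-homomorphism, maps into $\theta(1_\mathcal{K})\mathcal{B}\theta(1_\mathcal{K})$); iterating and passing to the SOT limit gives $\widehat{x}=P_\infty\widehat{x}P_\infty$. Next I would compute $pP_\infty p=\mbox{s-}\lim\bigl(p\theta^n(p)p+p\theta^n(1_\mathcal{K}-p)p\bigr)=\mbox{s-}\lim\tau^n(1_\mathcal{A})+0=q_\tau$, the second term vanishing because $\theta^n(1_\mathcal{K}-p)\leq 1_\mathcal{K}-p$ has trivial $p$-compression. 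Uniqueness of the lift forces $P_\infty=\widehat{q_\tau}$, whence $q_\tau\circ x=p\widehat{q_\tau}\widehat{x}p=pP_\infty\widehat{x}p=p\widehat{x}p=x$ and symmetrically $x\circ q_\tau=x$; so $q_\tau$ is the unit, proving (iv). The reverse (iv)$\Rightarrow$(iii) is immediate.

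For the final Perron--Frobenius assertions, a nontrivial fixed point forces $1\in\sigma(\tau)$ and contractivity gives $r(\tau)\leq 1$, so $r(\tau)=1$; a quick contradiction argument (assume $\|q_\tau\|=c<1$, so $q_\tau/c\leq 1_\mathcal{A}$; apply $\tau^n$ and pass to the SOT limit to obtain $q_\tau/c\leq q_\tau$, forcing $q_\tau=0$) yields $\|q_\tau\|=1$, so $q_\tau$ is a Perron--Frobenius eigenvector; and for any PF eigenvector $y$, the chain $y=\tau^n(y)\leq\|y\|\tau^n(1_\mathcal{A})\to q_\tau$ shows $y\leq q_\tau$, establishing maximality and uniqueness. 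The step I expect to be the main obstacle is the identification $P_\infty=\widehat{q_\tau}$---specifically the compression calculation $pP_\infty p=q_\tau$---which is where the preceding lemma does real work and where the non-unital dilation bookkeeping (distinguishing $1_\mathcal{A}=p$ from $1_\mathcal{K}$) matters most.
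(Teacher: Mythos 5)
Your proof is correct and follows essentially the same route as the paper: the equivalences rest on $q_\tau$ being a fixed point, and unitality is obtained by identifying the decreasing projection limit $P_\infty=q_\theta$ in the minimal dilation as the unit of $\mathcal{P}(\theta)$ and compressing it to $q_\tau$ via the lemma $\theta(1_{\mathcal{K}}-p)\le 1_{\mathcal{K}}-p$. The only divergences are minor: the paper handles (iii)$\Rightarrow$(i) by the contrapositive ($q_\tau=0$ forces $\tau^n(x)\to 0$ for all $x$) rather than by Kadison--Schwarz, it routes the compression computation through the strong-dilation identity $p\theta^n(z)p=\tau^n(pzp)$ in the unitization rather than your direct splitting of $\theta^n(1_{\mathcal{K}})$, and it leaves the Perron--Frobenius assertions (which you argue correctly) without explicit proof.
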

\begin{proof}
If $q_\tau =0$, then $\tau ^n(x)$ converges to $0$ for every $x$ and hence $\tau $ can't admit any peripheral eigenvectors. So, let us take $q_\tau \neq 0$. Then $\tau(q_\tau )= ~\mbox{s.}\lim_{n\to \infty}\tau^{n+1}(1_{\mathcal{A}})=q_\tau.$ Therefore $q_\tau $ is a fixed point of $\tau $ and hence $\mathcal{P}(\tau )$ is non-trivial. This shows the equivalence of (i), (ii), and (iii).

Assume (iii). To see that $\mathcal{P}(\tau )$ is a unital $C^*$-algebra, recall that $\mathcal{P}(\tau)$ is in an isometric bijection with $\mathcal{P}(\theta)$. 
Since $\theta^n(I)$ is a projection, where $ I=1_{\mathcal{K}}$ is the identity of the dilation space, clearly $q_{\theta}=~\mbox{s.}\lim_{n\to\infty}\theta^n(I)$ is also a projection and is a fixed point of $\theta$. From the $*$-endomorphism property of $\theta $, it is clear that $\mathcal{P}(\theta )$ is a unital $C^*$-algebra with $q_\theta$ as its identity.

Now, we show that $p q_\theta p=q_\tau$, which would imply that $q_\tau$ is the identity in $\mathcal{P}(\tau)$. Note that $p$ is identified with the identity of $\mathcal{A}.$
Then, $$p\theta^n(z)p\oplus 0=(p\oplus 1_{\mathbb{C}})(\tilde{\theta})^n (z \oplus 0) (p\oplus 1_{\mathbb{C}}) = (\tilde{\tau})^n(pzp\oplus 0), \ \forall z \in \mathcal{B}.$$ This implies $p\theta^n(z)p=\tau^n(pzp)$. Taking $z=I$ and the strong limit on both sides, we have $q_\tau=~\mbox{s.}\lim_{n\to\infty}\tau^n(1_{\mathcal{A}})$ $   =~\mbox{s.}\lim_{n\to\infty}p\theta^n(I)p=pq_\theta p.$ So (iii) implies (iv).
Finally, (iv) implies (iii), is trivial.
\end{proof}

\begin{rem} In proving  $pq_\theta p= q_\tau$, we have made non-trivial  use  of the following property of the minimal dilation of quantum Markov semigroups: $$p\theta ^n(z)p=\tau ^n(pzp), ~~\forall n\geq 0, z\in \mathcal{B}.$$
This feature plays a crucial role in the general theory of dilation. Dilations satisfying such a condition were called as {\em strong dilations\/} by Shalit and Skeide \cite{SS}. For minimal dilations of quantum Markov semigroups it is a consequence of the fact: $\theta (1_{\mathcal{K}}-p) \leq (1_{\mathcal{K}}-p).$ Earlier, its importance was noted in \cite{Bh3} and was called as {\em regularity}.
\end{rem}

The unilateral right-shift on $l^{\infty}(\mathbb{Z}_+)$ shows that a contractive completely positive map with unit spectral radius need not have any Perron-Frobenius eigenvectors. 
The following example shows that, in general, the maximal Perron-Frobenius eigenvector $q_{\tau }$ of $\tau $ need not be a projection. See Section 4.6 of  \cite{BT}  for some related comments.

\begin{eg}
Let $\tau:\mathbb{M}_2 \to \mathbb{M}_2$ be a contractive CP map, defined by,
$$\tau(\begin{bmatrix}x_{11}&x_{12}\\x_{21}&x_{22}  \end{bmatrix})=\begin{bmatrix}x_{11}&0\\0&\frac{x_{11}}{2}\end{bmatrix}.$$ 
Here $q_\tau =\begin{bmatrix}1&0\\0&\frac{1}{2}\end{bmatrix}$ and $\mathcal{P}(\tau )=\mathbb{C}q_{\tau }.$
\end{eg}

Motivated by Proposition 4.6.5 in \cite{BT}, we have the following example, showing that any $q\geq 0$ with $\|q\|=1$, can become the identity of some peripheral Poisson boundary.
\begin{eg}\label{C}  Let $q\geq 0$ be an element in a von Neumann algebra $\mathcal{A}$ with $\|q\|=1.$  Let $\varphi $ be a normal state on $\mathcal{A}$ satisfying 
$\varphi(q)=1.$ For any scalar $\lambda $, with $0<\lambda <1,$ consider the contractive CP map $\tau$ defined by,
$$\tau(x)= \lambda x+(1-\lambda )\varphi(x)q.$$
Then it is easily seen that the peripheral Poisson boundary of $\tau$ is the one-dimensional space $\mathbb{C}q$, isomorphic to $\mathbb{C}$ with $q$ as its identity. 
\end{eg}

 A contractive CP map is said to be {\em peripherally automorphic} if the extended Choi-Effros product on the pereipheral Poisson boundary is same as the original product. Here we characterize peripherally automorphic maps on $\mathbb{M}_d$, in terms of their Choi-Kraus coefficients, extending a similar result proved for unital maps in \cite{BKT2}.

\begin{thm}
    Let $\tau:\mathbb{M}_d\to\mathbb{M}_d$ be a contractive CP map with $\tau(x)=\sum_{i=1}^n k_i^*xk_i$ for some $k_1, \ldots , k_n\in \mathbb{M}_d,  n\geq 1$. Then the following are equivalent:
    \item[(i)] $\tau$ is peripherally automorphic.
    \item[(ii)] For $\lambda \in \mathbb{T}$, $y\in E_{\lambda}(\tau)$ if and only if $yk_i=\lambda k_i y$ for all $1\leq i \leq n$. 
\end{thm}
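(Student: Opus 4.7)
The plan is to recast peripheral automorphy of $\tau$ as the equality case of the Kadison--Schwarz inequality and then read off the Kraus commutation through the minimal Stinespring representation. Write $\tau(x)=V^*\pi(x)V$, where $\pi(x)=x\otimes I_n$ acts on $\mathcal{H}\otimes \mathbb{C}^n$ and $V:\mathcal{H}\to \mathcal{H}\otimes \mathbb{C}^n$ is given by $Vh=\sum_i k_i h\otimes e_i$ (so $V^*V=\tau(1_{\mathcal{A}})$); this is the vehicle for the whole argument.

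For (i) $\Rightarrow$ (ii), fix $y\in E_\lambda(\tau)$ with $\lambda\in\mathbb{T}$, so that $y^*\in E_{\bar\lambda}(\tau)$. Peripheral automorphy gives $y^*y=y^*\circ y$, and since $y^*\circ y$ lies in $E_1(\tau)=F(\tau)$ (because $\tau$ is an automorphism of the peripheral boundary and $\bar\lambda\lambda=1$), we obtain $\tau(y^*y)=y^*y$. On the other hand $\tau(y^*)\tau(y)=\bar\lambda\lambda\, y^*y=y^*y$, so we land in the Schwarz equality $\tau(y^*y)=\tau(y^*)\tau(y)$. In Stinespring form this reads
$$V^*\pi(y)^*(I-VV^*)\pi(y)V=0,$$
which forces $(I-VV^*)^{1/2}\pi(y)V=0$ and hence $\pi(y)V=VV^*\pi(y)V=V\tau(y)=\lambda V y$. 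Comparing components in $\mathcal{H}\otimes\mathbb{C}^n$ yields $yk_i=\lambda k_i y$ for every $i$. For the reverse half of (ii), a direct computation gives $\tau(y)=\sum_i k_i^*yk_i=\lambda\sum_i k_i^*k_i\, y=\lambda\tau(1_{\mathcal{A}})y$, and the unital structure of $\mathcal{P}(\tau)$ supplied by Theorem~\ref{PP} (so that on the peripheral spectral subspace $q_\tau$ acts as identity and $\tau(1_{\mathcal{A}})y=y$) collapses this to $\tau(y)=\lambda y$.

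For (ii) $\Rightarrow$ (i), take $x\in E_\mu(\tau)$ and $y\in E_\lambda(\tau)$ with $\lambda,\mu\in\mathbb{T}$. Applying the Kraus commutation to $y$,
$$\tau(xy)=\sum_i k_i^*xyk_i=\lambda\sum_i k_i^*xk_i\, y=\lambda\tau(x)\, y=\lambda\mu\, xy,$$
so $xy\in E_{\lambda\mu}(\tau)$. The limit formula from the previous remark then gives $x\circ y=\mathrm{s.}\lim_n \tau^n(xy)/(\lambda\mu)^n=xy$. Bilinear extension across $E(\tau)$ and the norm continuity of both products propagate the identity $x\circ y=xy$ to all of $\mathcal{P}(\tau)$, which is exactly peripheral automorphy.

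The main technical obstacle is the Schwarz-equality step in (i) $\Rightarrow$ (ii): one must genuinely use that the \emph{extended} Choi--Effros product coincides with the ordinary product on $y^*y$ in order to land in the equality case of Kadison--Schwarz, and then separate the resulting operator identity in $\mathcal{H}\otimes\mathbb{C}^n$ into its $n$ Kraus components. A secondary subtlety specific to the contractive setting is the final reduction $\tau(1_{\mathcal{A}})y=y$ in the reverse half of (ii), which is automatic in the unital case treated in \cite{BKT2} but here must be extracted from the unitality of $\mathcal{P}(\tau)$ established in Theorem~\ref{PP}.
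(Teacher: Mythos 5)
Your implication (ii)$\Rightarrow$(i) and the half of (i)$\Rightarrow$(ii) asserting that peripheral eigenvectors satisfy the Kraus commutation relations are both correct, and the latter follows a genuinely different route from the paper's. The paper passes to the unitization $\tilde\tau$ on $\mathbb{M}_d\oplus\mathbb{C}$, realizes it as a UCP map $\tilde{\tilde\tau}$ on $\mathbb{M}_{d+1}$ with Kraus coefficients $m_i=k_i\oplus 0$ plus a correction $\psi$, checks that peripheral automorphy survives these extensions, and then quotes the unital-case Theorem 2.6 of \cite{BKT2}. You instead argue directly: peripheral automorphy puts $y^*y=y^*\circ y$ into $F(\tau)$, which together with $\tau(y^*)\tau(y)=\bar\lambda\lambda\, y^*y=y^*y$ lands in the equality case of the Schwarz inequality $\tau(y^*)\tau(y)\leq\tau(y^*y)$ (valid here because $V^*V=\tau(1)\leq 1$ gives $VV^*\leq I$ in the Stinespring picture), and the resulting identity $\pi(y)V=\lambda Vy$ decomposes componentwise into $yk_i=\lambda k_iy$. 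This is self-contained, avoids the unitization bookkeeping and the external citation, and in effect re-proves the unital-case ingredient rather than importing it; what the paper's route buys is brevity at the cost of verifying that automorphy passes to $\tilde{\tilde\tau}$.

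The remaining half of (i)$\Rightarrow$(ii) --- that $yk_i=\lambda k_iy$ for all $i$ forces $y\in E_\lambda(\tau)$ --- contains a genuine gap. From the commutation you correctly obtain $\tau(y)=\lambda\,\tau(1)y$, but the reduction $\tau(1)y=y$ does not follow from the unitality of $\mathcal{P}(\tau)$: the unit supplied by Theorem \ref{PP} is $q_\tau=\mbox{s-}\lim_{n}\tau^n(1)$, not $\tau(1)$; it is a unit only for the $\circ$-product, not the ordinary one; and, decisively, $y$ is not known at this stage to lie in $\mathcal{P}(\tau)$ --- that is precisely what is to be proved. The step genuinely fails in the non-unital setting: take $k_1=\tfrac12 1$ on $\mathbb{M}_d$, so $\tau(x)=\tfrac14 x$. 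Then $E_\lambda(\tau)=\{0\}$ for every $\lambda\in\mathbb{T}$, so $\mathcal{P}(\tau)=\{0\}$ and $\tau$ is (vacuously) peripherally automorphic, yet $y=1$ commutes with $k_1$ while $\tau(y)=\tfrac14 y\neq y$. So this half needs either a hypothesis excluding such degeneracies or an argument that the commutation relations force $\mathrm{ran}(y)\subseteq\ker(1-\tau(1))$. (The paper's own proof is silent on this half --- it derives only the forward implication and declares (ii) --- so the defect is inherited rather than introduced by you, but the justification you propose for it is not valid.)
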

\begin{proof}
  Assume (i). Let $\tilde{\tau}:\mathbb{M}_d\oplus \mathbb{C}\to \mathbb{M}_d\oplus \mathbb{C}$ be the unitization of $\tau .$ 
 We claim that $\tilde{\tau }$ is also peripherally automorphic. To see this note that, if $y\in E_\lambda(\tau), $ and $z\in E_\mu (\tau)$, where $\lambda, \mu \in \mathbb{T}$, the Choi-Effros product coincides with the natural product by the assumption on $\tau$. We know that $\mathcal{P}(\tilde{\tau})=\overline{\mbox{span}}~ ( \mathcal{P}(\tau)\oplus0 )\cup\{I\oplus1\}$, where $I=1_{\mathbb{M}_d},$ and $(y\oplus0) \circ (I\oplus1)=s.\lim_{n\to\infty}\frac{\tau^n(y)\oplus 0}{\lambda^n}=y\oplus 0=(y\oplus0) (I\oplus1)$. Similarly, we have $(I\oplus0)\circ (y\oplus0)=(I\oplus0) (y\oplus0)=y\oplus0$.
 Now we extend $\tilde{\tau }$ to a UCP map on $\mathbb{M}_{d+1}$ in a natural way by setting,
$$\tilde{\tilde{\tau}}\begin{bmatrix}
     x&x_{12}\\x_{21}&c
 \end{bmatrix}= \tilde{\tau}(x\oplus c)= 
 \begin{bmatrix}
     \tau(x)+c(1-\tau (1))&0\\0&c
 \end{bmatrix}
 $$
 where $ x\in \mathbb{M}_d, \ x_{12}\in \mathbb{M}_{d,1}, \ x_{21}\in \mathbb{M}_{1,d} \ \mbox{and} \ c\in \mathbb{C}$. Then, we have  $ \tilde{\tilde {\tau}}(z) =\sum_{i=1}^n m_i^*z m_i +\psi (z)$, for a suitable CP map $\psi $ and $m_i$'s of the form $$m_i=\begin{bmatrix}
     k_i & 0\\ 0&0
 \end{bmatrix}.$$ Based on the definition of $\tilde{\tilde{\tau}},$ we see that $\tilde{\tilde{\tau}}$ is also peripherally automorphic. Then, from Theorem 2.6 of  \cite{BKT2}, if $y\in E_{\lambda}(\tau ),$
 $$\begin{bmatrix}
     y& 0\\ 0&0
 \end{bmatrix} \begin{bmatrix}
     k_i & 0\\ 0&0
 \end{bmatrix}= \lambda \begin{bmatrix}
     k_i & 0\\ 0&0
 \end{bmatrix} \begin{bmatrix}
     y & 0\\ 0&0
 \end{bmatrix}, $$
 which yields (ii). 

 In the converse direction, (ii) implies (i) follows easily from the formula (\ref{strong formula}).

\end{proof}

An interesting fact is that if $\tau$ is peripherally automorphic, then $q_\tau =\lim _{n\to \infty}\tau ^n(I)$ is a projection in $\mathbb{M}_d$.  This is because $q^2_\tau=q_\tau\circ q_\tau=q_\tau$.

 \section{One parameter quantum dynamical semigroups}

Here we first recall the minimal dilation theorem for one-parameter
semigroups of completely positive maps on von Neumann algebras and
then we extend the peripheral Poisson boundary theory to this
setting.

\begin{defn}\label{QDS} 
A {\em quantum 
dynamical semigroup\/} is a triple $(\mathcal{H},
\mathcal{A}, \tau )$ where $\mathcal{H}$ is a complex, separable
Hilbert space and  $\mathcal{A}\subseteq \mathscr{B}(\mathcal{H})$
is a von Neumann algebra. Further, $\tau =\{\tau _t: t\in
\mathbb{R}_+\}$ is a family of linear maps on $\mathcal{A}$
satisfying,
\begin{itemize}
\item[1)] $\tau_t$ is completely positive for every $t$;
\item[2)] $\tau_s(\tau_t(x))=\tau_{s+t}(x), ~~\forall s,t\in \mathbb{R}_+, x\in
\mathcal{A}$;
\item[3)] $\tau_t(1) \leq 1$ , $\forall t\in \mathbb{R}_+$;
\item[4)] $x \mapsto \tau_t(x)$ is normal, $\forall t\in
\mathbb{R}_+$;
\item[5)]$t \mapsto \tau_t(x)$ is continuous in strong operator topology for every
$x\in \mathcal{A}.$
\end{itemize}

A quantum dynamical semigroup is called {\em unital/conservative/Markov \/}
when $\tau_t(1)=1$, $\forall t$.
 It is called an {\em $E$-semigroup\/}  if
$\tau _t$ is a $*$-endomorphism for every $t$ and an {\em  $E$-semigroup \/}
is called an $E_0$-semigroup if it is unital.
\end{defn}

We may abbreviate quantum dynamical semigroup to QDS and quantum Markov semigroup to QMS. We make use of the following elementary example to illustrate some of the concepts involved.
\begin{eg}\label{Schur}

Take $\mathcal{A}= \mathbb{M}_2$ as the von Neumann algebra.
Fix a complex number $c$ with Re$(c)\leq 0.$ Define $\tau=\{\tau_t\}, \forall t \in \mathbb{R}_+$ on $\mathcal{A}$ by, 
$$\tau_t(\begin{bmatrix}
          x_{11} & x_{12}\\
          x_{21}& x_{22}
      \end{bmatrix})=\begin{bmatrix}
          x_{11} &  \exp{(ct)}x_{12}\\
          \exp{(\bar{c}t)}x_{21}& x_{22}
      \end{bmatrix}$$
The condition Re$(c)\leq 0$ ensures that $\tau _t $ is a Schur product with a positive matrix and hence completely positive.  Then it is easy to see that  $\tau $  is a quantum Markov semigroup.
\end{eg}

\begin{thm} {\em (\cite{Bha96}, \cite{Bha99}, \cite{MS})}
Let $(\mathcal{H}, \mathcal{A}, \tau )$ be a quantum dynamical
semigroup. Then there exists an $E$-semigroup
$(\mathcal{K},\mathcal{B},\theta)$, satisfying the following
conditions:
\begin{itemize}
\item[(i)] $\mathcal{K}$ is a Hilbert space such that $\mathcal{H } \subseteq \mathcal{K}$.
\item[(ii)] $\mathcal{B} \subseteq \mathscr{B}(\mathcal{K})$ is a von Neumann algebra satisfying $p\mathcal{B}p= \mathcal{A}$ where $p$ is the projection of $\mathcal{K} $ onto $\mathcal{H}$.
\item[(iii)]  $\tau_t(x)=p\theta_t(x)p$ ; $ \forall x \in \mathcal{A}$.
\item[(iv)]  $\mathcal{K}=\overline{span}\{\theta_{r_1}(x_1)\theta_{r_2}(x_2)\ldots \theta_{r_n}(x_n)u: r_1 > r_2 > \cdots
 r_n \geq 0, x_i \in \mathcal{A}, u \in
\mathcal{H}\}.$
\item[(v)] $\mathcal{B}$ is the von Neumann algebra generated by
$\{ \theta _t(x): t\in \mathbb{R}_+, x\in \mathcal{A}\}.$

\end{itemize}
\end{thm}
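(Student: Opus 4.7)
My plan is to reduce first to the Markov (unital) case via a continuous-time version of the unitization trick already used in Section 3, and then to construct the $E_0$-dilation by an iterated Stinespring construction. Given the CCP semigroup $\tau = \{\tau_t\}$, define $\tilde{\tau}_t : \mathcal{A}\oplus\mathbb{C} \to \mathcal{A}\oplus\mathbb{C}$ by $\tilde{\tau}_t(x\oplus c) = [\tau_t(x) + c(1-\tau_t(1))]\oplus c$. A direct computation using $\tau_s(1-\tau_t(1)) = \tau_s(1) - \tau_{s+t}(1)$ shows that $\{\tilde{\tau}_t\}$ is a unital normal semigroup; strong continuity of $t\mapsto\tilde{\tau}_t(x\oplus c)$ follows from that of $\tau_t$ together with strong continuity of $t \mapsto \tau_t(1)$ (the latter is implied by (5) in Definition \ref{QDS}). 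If $(\tilde{\mathcal{K}},\tilde{\mathcal{B}},\tilde{\theta})$ is a minimal dilation of $\tilde{\tau}$, then (as in Section 3) one sets $\mathcal{K} = (0\oplus\mathbb{C})^{\perp}$, identifies $\tilde{\mathcal{B}} = \mathcal{B}\oplus\mathbb{C}$, and defines $\theta_t$ as the $(1,1)$-corner of $\tilde{\theta}_t$; the verification that $(\mathcal{K},\mathcal{B},\theta)$ satisfies (i)--(v) is routine once the Markov case is settled.

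For the Markov case, I would build the dilation space by an inductive limit indexed by finite ordered tuples $\mathbf{t} = (t_1 > t_2 > \cdots > t_n \geq 0)$ in $\mathbb{R}_+$. For each such $\mathbf{t}$ form the Hilbert $\mathcal{A}$-module obtained by iterating the Stinespring-GNS construction applied to $\tau_{t_1 - t_2}, \tau_{t_2 - t_3}, \ldots, \tau_{t_{n-1}-t_n}, \tau_{t_n}$ acting on $\mathcal{H}$; refining a partition corresponds to composing semigroup maps and gives compatible isometric embeddings thanks to the semigroup property. The completed inductive limit is $\mathcal{K}$, and candidate generators are the elementary tensors $\theta_{t_1}(x_1)\cdots\theta_{t_n}(x_n)u$ that appear in (iv). Define $\theta_t$ by shifting all time labels by $+t$; this is an endomorphism on the algebra generated by such elementary tensors. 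Take $\mathcal{B}$ to be the von~Neumann algebra these generate, so (v) holds automatically, and (iv) holds by construction. Property (iii) reduces to the Stinespring property at each time $t$, which in turn is exactly how the compatible family was glued. The uniqueness up to unitary equivalence, while not asked for in the statement, follows from the standard argument of matching the generators in (iv) pointwise.

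The hard part is the continuous-time bookkeeping: one has to check that $t \mapsto \theta_t$ is strongly continuous, that $\theta_t$ is normal and multiplicative (not merely completely positive and unital), and that the inductive-limit structure is compatible with the von~Neumann closure in (v) rather than only with the norm closure. For strong continuity, one uses density of the vectors in (iv) and reduces, via the semigroup identity, to strong continuity of $s \mapsto \theta_s(x)$ at $s=0$ on a generating set, which in turn comes from strong continuity of $\tau$. Multiplicativity of $\theta_t$ on the generators follows from the shift interpretation because a time shift preserves the ordering $t_1 > \cdots > t_n$ and does not create new coincidences; normality then follows by a bounded-convergence argument on the inductive-limit picture, as in \cite{Bha99} and \cite{MS}. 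Modulo these topological subtleties, which are precisely the issues handled carefully in \cite{BS}, the construction proceeds as above and produces an $E$-semigroup dilating $\tau$ with all of (i)--(v).
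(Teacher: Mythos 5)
The paper does not prove this theorem: it is recalled verbatim from \cite{Bha96}, \cite{Bha99} and \cite{MS} (with the von Neumann algebra topology handled in \cite{BS}), so there is no in-paper argument to compare against. Your sketch --- unitization to reduce to the Markov case, then an inductive limit of iterated Stinespring/GNS constructions indexed by decreasing time-tuples with $\theta_t$ acting as a time shift --- is exactly the construction of those references (and the unitization step is the same one the paper itself uses in Sections 3 and 4.2), so it is consistent with the intended proof; just be aware that the normality, multiplicativity and strong-continuity verifications you defer at the end are the substantive content of the cited papers rather than routine checks.
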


The triple $(\mathcal{K}, \mathcal{B}, \theta)$ is called the {\em minimal dilation\/} of $(\mathcal{H}, \mathcal{A}, \tau) $. The unit of $\mathcal{B}$ is same as $1_{\mathcal{K}}.$ Such a dilation is unique up to unitary equivalence. It is true that if $\tau $ is a quantum Markov semigroup (i.e., $\tau $ is unital) then so is $\theta$ and $\theta_t(p)$ increases to the identity as $t$ increases to infinity.

 The concept of fixed points and peripheral eigenvectors in the continuous case is analogous to the discrete case, and here is the relevant definition. 
\begin{defn} Let $\tau $ be a quantum dynamical semigroup on a von Neumann algebra $\mathcal{A}.$ 

{(i)} The set of harmonic elements or {\em fixed points\/}  of $\tau $ is given by  
\begin{equation}\label{fixed}
F(\tau):=\{x\in\mathcal{A}: \tau_t(x)=x;~ \forall t \in \mathbb{R}_+\}. \end{equation}

{(ii)} The {\em peripheral eigenvectors\/} of $\tau $ are given by \begin{equation}\label{Peripheral} E_{a}(\tau):=\{x\in \mathcal{A}: \tau_t(x)=\exp{(iat)} x;~ \forall t \in \mathbb{R}_+ \}, \end{equation}  with $a \in \mathbb{R}$.  
\end{defn}

As in the discrete case, we consider 
\begin{eqnarray*} E(\tau)&=&  \mbox{span}\{x\in E_a( \tau) :  a\in \mathbb{R} \}.\\
     \mathcal{P}(\tau)&=&\overline{\mbox{span}}{\{x\in E(\tau)\}}.\end{eqnarray*}

The space $F(\tau)$ also has the `$\circ$' Choi-Effros product as
in the discrete case and $(F(\tau ), \circ )$ is a von Neumann
algebra, known as the {\em Poisson boundary\/}  of $\tau .$ Now we wish to
have peripheral Poisson boundary by extending this product to $\mathcal{P}(\tau ).$

\subsection {Quantum Markov semigroups} In this subsection, we develop the peripheral Poisson boundary theory for quantum Markov semigroups. So, we are considering a triple $(\mathcal{H}, \mathcal{A}, \tau )$, where $\mathcal{A}$ is a von Neumann algebra and $\tau =\{\tau _t: t\in \mathbb{R}_+\}$ is a quantum Markov semigroup in the sense of Definition \ref{QDS}.
Like in the discrete case,  it is possible to lift peripheral eigenvectors of a quantum
Markov semigroup to that of its minimal dilation.

 \begin{lem}\label{strong formula}
Let $(\mathcal{K}, \mathcal{B}, \theta )$ be the minimal dilation of
a quantum Markov semigroup $(\mathcal{H}, \mathcal{A}, \tau ).$ Fix
$a\in \mathbb{R}$.  For every $x \in E_a(\tau )$, there exists a
unique $\widehat{x} \in E_a(\theta )$ such that $$\widehat{x} = ~\mbox{s.} \lim_{t \to \infty} \exp (-iat) \theta_t(x).$$
Conversely, if ${y}\in E_a(\theta )$ for some $a\in \mathbb {R}$, then $x\in E_a(\tau ),$ where $x= p{y}p$.

 \end{lem}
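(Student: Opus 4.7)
The plan is to adapt the discrete-case proof from \cite{BKT1} to the continuous QMS setting. Fix $x\in E_a(\tau)$ and set $v_t := e^{-iat}\theta_t(x)\in\mathcal B$; this family is norm-bounded by $\|x\|$ since $\theta_t$ is a contractive $*$-endomorphism. A preliminary observation clarifies uniqueness and yields a conceptual formula: if $\widehat x\in E_a(\theta)$ satisfies $p\widehat x p = x$, then
\[
\theta_t(x)=\theta_t(p\widehat xp)=\theta_t(p)\theta_t(\widehat x)\theta_t(p)=e^{iat}\theta_t(p)\widehat x\theta_t(p),
\]
so $v_t=\theta_t(p)\widehat x\theta_t(p)$. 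Since the QMS dilation is unital, $\theta_t(p)\uparrow 1_{\mathcal K}$ strongly, and therefore $v_t\to\widehat x$ in SOT. This proves uniqueness (any lift must coincide with the same strong limit) and gives the claimed formula once existence is secured.

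For existence I would prove directly that $(v_th)_{t\ge 0}$ is strongly Cauchy for each $h\in\mathcal H$. The compression identity $pv_tp=e^{-iat}\tau_t(x)=x$ makes the $p$-component of $v_th$ the constant $xh$. The scalar $\xi(t):=\langle h,\tau_t(x^*x)h\rangle=\|v_th\|^2$ is non-decreasing, since Kadison--Schwarz gives $\tau_r(x^*x)\ge \tau_r(x)^*\tau_r(x)=x^*x$ for every $r\ge 0$ and hence $\xi(s+r)\ge\xi(s)$; it is bounded by $\|x\|^2\|h\|^2$ and so converges. Using $\theta_t=\theta_s\circ\theta_{t-s}$, the $*$-homomorphism property, and the regularity $p\theta_s(z)p=\tau_s(pzp)$ for $z\in\mathcal B$ (a consequence of $\theta_s(p)\ge p$), one computes, for $s\le t$,
\[
\langle v_sh,v_th\rangle = e^{-ia(t-s)}\langle h,\tau_s\bigl(px^*\theta_{t-s}(x)p\bigr)h\rangle = \langle h,\tau_s(x^*x)h\rangle = \xi(s),
\]
the middle simplification using $px^*\theta_{t-s}(x)p=(px^*p)(p\theta_{t-s}(x)p)=e^{ia(t-s)}x^*x$. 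Consequently $\|v_th-v_sh\|^2=\xi(t)-\xi(s)\to 0$, so $v_t|_{\mathcal H}$ converges in SOT.

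To extend from $\mathcal H$ to all of $\mathcal K$ I would invoke the minimality clause: vectors of the form $\theta_{r_1}(x_1)\cdots\theta_{r_n}(x_n)u$ are dense in $\mathcal K$. For $t\ge r_1$,
\[
v_t\,\theta_{r_1}(x_1) = \theta_{r_1}\bigl(e^{-iat}\theta_{t-r_1}(x)x_1\bigr),
\]
and since $x_1=px_1p$ sends $\mathcal K$ into $\mathcal H$, the inner family converges in SOT by the previous step. Normal $*$-endomorphisms of von Neumann algebras are SOT-continuous on bounded sets, so an induction on $n$ together with uniform boundedness of $(v_t)$ extends SOT convergence to all of $\mathcal K$. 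The limit $\widehat x\in\mathcal B$ then satisfies $p\widehat xp=x$ (from $pv_tp=x$) and $\theta_s(\widehat x)=e^{ias}\widehat x$ (by SOT-continuity of $\theta_s$). The converse direction is immediate from regularity: for $y\in E_a(\theta)$, $\tau_t(pyp)=p\theta_t(y)p=e^{iat}pyp$, so $pyp\in E_a(\tau)$.

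The main obstacle is the Cauchy argument on $\mathcal H$: the identity $\langle v_sh,v_th\rangle=\xi(s)$ for $s\le t$ is the crux, converting vector convergence into convergence of the bounded monotone scalar $\xi$. This identity---as well as the uniqueness argument---relies critically on the regularity property $\theta_t(p)\ge p$ of QMS minimal dilations.
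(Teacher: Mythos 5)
Your proposal is correct; all the steps check out, including the inner-product identity $\langle v_sh,v_th\rangle=\xi(s)$ and the extension to $\mathcal{K}$. It follows the same overall strategy as the paper (show $e^{-iat}\theta_t(x)$ is strongly Cauchy by exhibiting orthogonal increments with monotone bounded norms, then read off the eigenvector relation and the compression), but the two key sub-steps are implemented differently. The paper derives the Pythagorean identity from the single relation $\theta_s(p)x_t\theta_s(p)=x_s$ for $s\le t$ (where $x_t=e^{-iat}\theta_t(x)$), which shows $x_sh_0$ is literally the orthogonal projection $\theta_s(p)(x_th_0)$ for vectors $h_0=\theta_{t_0}(p)h$; since $\theta_t(p)\uparrow 1_{\mathcal{K}}$, such vectors approximate any $h\in\mathcal{K}$, so no separate density argument via the minimality clause is needed, and neither Kadison--Schwarz nor the regularity identity $p\theta_s(z)p=\tau_s(pzp)$ is invoked at this stage. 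You instead prove the identity only on $\mathcal{H}$, obtaining monotonicity of $\xi(t)=\langle h,\tau_t(x^*x)h\rangle$ from Kadison--Schwarz and the cross term from regularity, and then propagate convergence to $\mathcal{K}$ through the totality of the vectors $\theta_{r_1}(x_1)\cdots\theta_{r_n}(x_n)u$ together with SOT-continuity of normal endomorphisms on bounded sets (as you note, a single application of the $\theta_{r_1}$ step already covers all $n\ge 1$, so the induction is not really needed). The paper's route is more economical and stays entirely inside the projection structure $\{\theta_t(p)\}$; yours makes explicit the roles of regularity and of the generating set, and the on-$\mathcal{H}$ Cauchy argument would survive in settings where one prefers not to use $\theta_t(p)\uparrow 1$ directly. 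The uniqueness argument and the converse direction coincide with the paper's.
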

 \begin{proof}
    We prove existence of $\widehat{x}$, for $x \in \mathcal{A}=p\mathcal{B}p$. Let $x_0=x$, $x_t=\theta_t(x)\exp(-iat)$.

We show $\theta_s(p)x_t\theta_s(p)=x_s$;  $\forall 0 \leq s \leq t$:

$$ \theta_s(p)x_t\theta_s(p)= \theta _s(p) \theta _t(x)\theta _s(p)\exp (-iat)= \theta _s(p\theta _{t-s}(x)p)\exp (-iat) $$  $$= \theta _s(\tau _{t-s}(x))\exp(-iat) = \theta _s(x)\exp (-ias)=x_s.$$
Since $\theta_t (p)$ is a family of projections that increases to identity,
for $h \in \mathcal{K}$ and $\epsilon > 0$, we can choose $t_0$ such
that $\|h-\theta_{t_0}(p) h\|< \epsilon$. Taking $h_0=\theta_{t_0}(p)h$,
for $ t_0<s<t$, $x_sh_0$  and $x_t h_0 - x_s h_0$ are
mutually orthogonal, therefore $\{\|x_t h_0\|^2\}_{t\geq t_0}$ is an increasing function bounded by $\|x\|^2\|h_0\|^2$.
Also, $$\|(x_t-x_{s})h_0\|^2 =\|x_t h_0\|^2-\|x_{s} h_0\|^2.$$
Therefore, $x_t h_0$ converges as $t\to \infty$. As this is true for every $\epsilon >0$,  $x_t h$ is also convergent as $t$ tends to infinity. 
Taking $\widehat{x}h:=\lim_{t \to \infty}x_t h$ and since $\|x_t
h\|\leq\|x\|\|h\|, \forall t$,  $\widehat{x}$ defines a bounded
operator in $\mathcal{B}$. It is clear that
$\theta_t(p)\widehat{x}\theta_t(p)=x_t$.
$$\theta_s(\widehat{x})=\theta_s(s.\lim_{t \to \infty}x_t)=\theta_s(s.\lim_{t \to
\infty}\frac{\theta_t(x)}{\exp(iat)})=\lim_{t \to
\infty}\exp(ias)\frac{\theta_{t+s}(x)}{\exp ia(t+s)}=\exp(ias)
\widehat{x},$$
    and $p\widehat{x}p=x$. To prove the uniqueness, suppose there exists $z \in \mathcal{B}$,
such that $\theta_t(z)=\exp(iat) z$, and $pzp=x.$
     $$z= s.\lim_{t \to \infty} \theta_t(p)z\theta_t(p)=s.\lim_{t \to \infty}\exp(-iat) \theta_t(pzp)=s.\lim_{t \to \infty}\exp(-iat) \theta_t(x)=\widehat{x}.$$

To see the second part, let $y\in \mathcal{B}$, such that
$\theta_s(y)=\exp(ias) y$, for some $a \in
\mathbb{R}$. Take $x=pyp$. We get
$$\tau_s(x)=\tau_s(pyp)=p\theta_s(pyp)p=p\theta_s(p)\theta_s(y)\theta_s(p)p=p\theta_s(p)\theta_s(y)\theta_s(p)p=\exp(ias)
x.$$ Hence $x\in E_a(\tau ).$

 \end{proof}

Now we are ready to prove the main theorem of this section. It allows us to define the peripheral Poisson boundary for quantum Markov semigroups.
\begin{thm}\label{Main 4}
     Let $\mathcal{A} \subseteq \mathscr{B}(\mathcal{H})$ be a von Neumann algebra, let $\tau=\{\tau_t\}:\mathcal{A}\to \mathcal{A}$ be a quantum Markov semigroup. Let $(\mathcal{K},\mathcal{B},\theta)$ be it's minimal dilation and let $p $ be the projection of $\mathcal{K}$ onto $\mathcal{H}$. Then the compression map $z \mapsto pzp$, restricted to $\mathcal{P}(\theta)$ is a completely positive map mapping the operator space $\mathcal{P}(\theta)$ completely isometrically and bijectively to $\mathcal{P}(\tau)$.
\end{thm}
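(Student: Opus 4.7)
My plan is to adapt the argument from the discrete unital case in \cite{BKT1} to the continuous setting, with Lemma \ref{strong formula} playing the role of the discrete lifting formula. The proof has three components: sending $E(\theta)$ bijectively to $E(\tau)$, upgrading this bijection to a complete isometry, and extending by density.

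First, compression by a projection $c(z):=pzp$ has Stinespring form $V^*zV$ with $V\colon\mathcal{H}\to\mathcal{K}$ the inclusion isometry, so $c$ is normal, unital, completely positive, and completely contractive; its restriction to $\mathcal{P}(\theta)$ inherits these properties. The second half of Lemma \ref{strong formula} yields $c(E_a(\theta))\subseteq E_a(\tau)$, whence $c(E(\theta))\subseteq E(\tau)$ and, by continuity, $c(\mathcal{P}(\theta))\subseteq \mathcal{P}(\tau)$. For the inverse, decompose every $x\in E(\tau)$ uniquely as $x=\sum_i x_i$ with $x_i\in E_{a_i}(\tau)$ and distinct $a_i$ (linear independence across distinct eigenvalues) and define $\ell(x):=\sum_i \widehat{x_i}$ using the existence part of the lemma. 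Uniqueness of the lift gives $\ell\circ c=\mathrm{id}$ on $E(\theta)$, while $c\circ \ell=\mathrm{id}$ on $E(\tau)$ is immediate from $p\widehat{x_i}p=x_i$. So $c|_{E(\theta)}$ is a linear bijection onto $E(\tau)$ with inverse $\ell$.

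The main technical step is complete isometry. The inequality $\|c(y)\|_n\le \|y\|_n$ is automatic. For the reverse, I use the intermediate identity established inside the proof of Lemma \ref{strong formula}, namely $\theta_s(p)\widehat{x_i}\theta_s(p)=\exp(-ia_i s)\theta_s(x_i)$, which sums to
$$\theta_s(p)\,\ell(x)\,\theta_s(p)\;=\;\theta_s\!\left(\sum_i \exp(-ia_i s)\,x_i\right)\;=\;\theta_s(T_{-s}x),$$
where $T_u\colon E(\tau)\to E(\tau)$ is the linear bijection defined by $T_ux_i:=\exp(ia_iu)x_i$, coinciding with $\tau_u|_{E(\tau)}$ for $u\ge 0$. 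Since $\theta_s(p)\uparrow 1_{\mathcal{K}}$ strongly and $\theta_s$ is contractive, $\|\ell(x)\|\le \sup_{s\ge 0}\|T_{-s}x\|_{\mathcal{A}}$. A compactness argument closes the estimate: the vector $x$ lies in the finite-dimensional span $V_x$ of its eigencomponents, on which $\{T_s|_{V_x}:s\in\mathbb{R}\}$ is a continuous one-parameter subgroup of a torus, so every $T_u|_{V_x}$ lies in the closure of $\{T_s|_{V_x}:s\ge 0\}$; contractivity of each $\tau_s$ then gives $\|T_ux\|\le \|x\|$ for every $u\in\mathbb{R}$, and combining the inequalities for $u$ and $-u$ via $T_uT_{-u}=\mathrm{id}$ forces equality. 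Hence $\|\ell(x)\|=\|x\|$. The identical reasoning with $\mathrm{id}_{M_n}\otimes \tau_s$ on $M_n(\mathcal{A})$ yields complete isometry.

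Finally, since $E(\theta)$ and $E(\tau)$ are norm-dense in $\mathcal{P}(\theta)$ and $\mathcal{P}(\tau)$ respectively, the completely isometric bijection $c|_{E(\theta)}$ extends uniquely by continuity to a completely isometric bijection $\mathcal{P}(\theta)\to\mathcal{P}(\tau)$, agreeing with $c$ throughout. The hardest point is the compactness/almost-periodicity step establishing $T_u$ is isometric on $E(\tau)$ for every $u\in\mathbb{R}$; isometry for single peripheral eigenvectors drops out of the limit formula, but extending to sums genuinely needs this torus-closure observation.
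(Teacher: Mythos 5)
Your proposal is correct and follows essentially the same route as the paper: the lifting lemma gives the bijection, and the reverse norm estimate rests on exactly the recurrence of the phases $(e^{ia_1s},\dots,e^{ia_ds})$ in $\mathbb{T}^d$ that the paper invokes via Lemma 2.2 of \cite{BKT1}, combined with contractivity of the dilated semigroup. The only cosmetic differences are that you run the recurrence through $\tau_s$ on the finite-dimensional eigenspan to get $\|T_{-s}x\|\le\|x\|$ (rather than estimating $\|yh\|$ vector-by-vector at a well-chosen time $m$ as the paper does), and that you spell out the matrix-level argument for complete isometry, which the paper leaves implicit.
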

\begin{proof}
Let us denote the compression map $z\mapsto pzp$ restricted to $\mathcal{P}(\theta )$ by $T.$
By the previous Lemma \ref{strong formula},  we have a map $x\mapsto \widehat{x}$ from $E(\tau )$ to $E(\theta )$, whose inverse is the compression map $T$. So $T$ is a bijection. It is clearly completely positive. Proving the isometry property requires some estimates.

Let $x \in E(\tau)$.  Suppose $x=\sum_{j=1}^{d}x_j$, where each $x_j \in E_{a_j}(\tau)$ for some $a_1, \ldots a_d \in \mathbb{R} . $ Now take  $y_j=\lim_{s \to \infty}\exp (-ia_js)\theta_s(x_j)$
and  $y=\sum_{j=1}^{d}y_j$. Fix $\epsilon >0.$
For fixed $h \in \mathcal{K}$ with norm equal to 1,
there exists $s_0>0$ such that $\forall s \geq s_0,$ $$||yh-\sum_{j=1}^{d}\exp (-ia_js )\theta_s(x_j)h||<\frac{\epsilon}{2}.$$
By a standard result (see \cite{BKT1}, Lemma 2.2), it is possible to
choose $m\in \mathbb{N}, m>s_0,$ such that $$\|\exp (ia_jm)-1\|\leq \frac{\epsilon}{2(\sum_{j=1}^{d}||x_j||)},~~ \forall 1\leq j\leq d.$$ By contractivity of endomorphisms, $||\theta_m(x_j)h||\leq ||x_j||$.  Then by triangle inequality,
\begin{eqnarray*}
    \|yh\|&\leq & \frac{\epsilon}{2}+
     \| \sum_{j=1}^{d}[\exp(-ia_j m)  -1]\theta _m(x_j)h\|
    +\|\sum _{j=1}^d\theta _m(x_j)h\|\\
    &\leq & \epsilon + \|\theta _m(\sum _{j=1}^dx_j)h\|\\
    &\leq & \epsilon +\|xh\|.      
\end{eqnarray*}
As this is true for every unit vector $h$ and $\epsilon >0$, 
we get $\|y\| \leq \|x \|.$
 Since $p$ is a projection and $x=pyp$, we get $\|x\|\leq \|y\|.$
    Therefore, $\|x\|= \|y\|$ and hence $T$   from $\mathcal{P}(\theta)$ onto $\mathcal{P}(\tau)$ is an isometry.
\end{proof}

Now we can define the extended Choi-Effros product on $\mathcal{P}(\tau )$, setting $x\circ y =p\widehat{x}.\widehat{y}p$, or equivalently $x\circ y= T (T^{-1}(x).T^{-1}(y))$. This new product makes  $\mathcal{P}(\tau )$  a  unital $C^*$-algebra as $\mathcal{P}(\theta )$ is a unital $C^*$-algebra with its original multiplication. This leads to the formal definition:

\begin{defn} Let $(\mathcal{H}, \mathcal{A}, \tau )$ be a quantum Markov semigroup. Then the $C^*$-algebra $(\mathcal{P}(\tau ), \circ)$ is called the {\em peripheral Poisson boundary\/}  of $\tau .$
\end{defn}

Here is one of the main properties of the peripheral Poisson boundary.
\begin{thm}
 Let $\tau=\{\tau_t :t\in \mathbb{R}_+\}$ be a quantum Markov semigroup on a von Neumann algebra $\mathcal{A}$.  Then $x\mapsto \tau_t(x)$ is an automorphism of the peripheral Poisson boundary $(\mathcal{P}(\tau), \circ )$. Consequently, restricted to $(\mathcal{P}(\tau ), \circ)$, $\{\tau _t: t\in \mathbb{R}_+\}$ is a semigroup of automorphisms.
\end{thm}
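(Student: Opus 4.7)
My plan is to transport the statement to the minimal dilation $(\mathcal{K},\mathcal{B},\theta)$, where $\theta_t$ is genuinely a $\ast$-endomorphism, via the isomorphism $T:z\mapsto pzp$ from $\mathcal{P}(\theta)$ onto $\mathcal{P}(\tau)$ supplied by Theorem~\ref{Main 4}. The key bridge is the intertwining identity
\[
\widehat{\tau_t(x)}=\theta_t(\widehat{x}),\qquad x\in\mathcal{P}(\tau),
\]
which converts statements about $\tau_t$ on $(\mathcal{P}(\tau),\circ)$ into statements about $\theta_t$ on $\mathcal{P}(\theta)$ with the ambient operator product in $\mathcal{B}$.

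First I would note that $\tau_t$ preserves $\mathcal{P}(\tau)$: on each peripheral eigenspace $E_a(\tau)$ it acts as the unit scalar $e^{iat}$, so $\tau_t(E_a(\tau))=E_a(\tau)$, and norm-continuity carries this over to $\mathcal{P}(\tau)=\overline{E(\tau)}$. The intertwining identity is then immediate on $E_a(\tau)$: both $\widehat{\tau_t(x)}$ and $\theta_t(\widehat{x})$ equal $e^{iat}\widehat{x}$, since $\widehat{x}\in E_a(\theta)$ by Lemma~\ref{strong formula}; linearity extends it to $E(\tau)$, and continuity of $\tau_t$, $\theta_t$, together with the isometric character of the lifting map from Theorem~\ref{Main 4}, extends it to $\mathcal{P}(\tau)$. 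The $\ast$-homomorphism property on $(\mathcal{P}(\tau),\circ)$ is now a direct computation:
\[
\tau_t(x)\circ\tau_t(y)=p\,\widehat{\tau_t(x)}\,\widehat{\tau_t(y)}\,p=p\,\theta_t(\widehat{x}\widehat{y})\,p=\tau_t(p\widehat{x}\widehat{y}p)=\tau_t(x\circ y),
\]
where the penultimate equality invokes the strong-dilation identity $p\theta_t(z)p=\tau_t(pzp)$ for all $z\in\mathcal{B}$, which holds in the Markov case because $\theta_t(p)$ increases to $1_{\mathcal{K}}$ and hence $\theta_t(1_{\mathcal{K}}-p)\leq 1_{\mathcal{K}}-p$. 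Preservation of adjoints is automatic.

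The main obstacle is bijectivity of $\tau_t|_{\mathcal{P}(\tau)}$, equivalently of $\theta_t|_{\mathcal{P}(\theta)}$. Surjectivity is clean: $\theta_t$ restricts to a bijection on each $E_a(\theta)$ (multiplication by the unit scalar $e^{iat}$), hence $E(\theta)$ lies in the image of $\theta_t|_{\mathcal{P}(\theta)}$, and that image is a $C^*$-subalgebra, so by density it equals $\mathcal{P}(\theta)$. Injectivity is the heart of the matter: algebraic injectivity on the dense subspace $E(\theta)=\bigoplus_{a}E_a(\theta)$ (by independence of distinct eigenspaces) does not \emph{a priori} preclude a non-trivial closed two-sided ideal in $\ker(\theta_t|_{\mathcal{P}(\theta)})$. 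Here I would exploit the graded $\ast$-algebra structure $E_a(\theta)\cdot E_b(\theta)\subseteq E_{a+b}(\theta)$ and $E_a(\theta)^{\ast}=E_{-a}(\theta)$ to show that $\theta_t$ is isometric --- hence injective --- on $\mathcal{P}(\theta)$, following the template of the discrete case in Theorem~2.12 of \cite{BKT1}, observing as in the remark following Theorem~\ref{Main 3} that the unitality of $\theta$ is not essential to that argument. Once bijectivity is in place, the semigroup identity $\tau_s\tau_t=\tau_{s+t}$ inherited from the original QMS immediately upgrades $\{\tau_t|_{\mathcal{P}(\tau)}:t\in\mathbb{R}_+\}$ to a semigroup of $\ast$-automorphisms of $(\mathcal{P}(\tau),\circ)$.
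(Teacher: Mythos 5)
Your proposal is correct and follows essentially the same route as the paper's proof: pass to the minimal dilation where $\theta_t$ is a genuine $*$-endomorphism of $\mathcal{P}(\theta)$, get surjectivity from the unit-scalar action on each $E_a(\theta)$, establish injectivity by showing $\theta_t$ is isometric on $E(\theta)$, and pull the automorphism back through the compression map. One small calibration: the isometry step (both in the paper and in the discrete template you cite) does not come from the grading $E_a(\theta)\cdot E_b(\theta)\subseteq E_{a+b}(\theta)$ but from recurrence of the peripheral eigenvalues --- one chooses $n$ with $|\exp(ia_jnt)-1|$ uniformly small so that $\|\theta_{nt}(x)-x\|<\epsilon$, which together with $\|x\|\geq\|\theta_t(x)\|\geq\|\theta_{nt}(x)\|$ forces $\|\theta_t(x)\|=\|x\|$.
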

\begin{proof}
    Consider the map $x\to\theta_t(x)$ on the peripheral Poisson boundary of the quantum Markov semigroup $\theta=\{\theta_t\}$.
Clearly, $\theta_t:E(\theta)\to E(\theta)$ is a $*$-endomorphism. As $\theta_t(x)=\exp(iat) x; \ \forall x \in E_a(\theta)$, every $x\in E(\theta )$ is in the range of $\theta $ and is thus surjective.
 We prove $\theta_t$ is isometric, therefore one-one in $E(\theta)$; consequently will be an isometry for $\mathcal{P}(\theta)$.
Since $\theta_t$ is a $*$-endomorphism $||{x}|| \geq ||\theta_t(x)||
\geq ||\theta_{nt}(x)||, \forall t\in \mathbb{R}_+, n\in
\mathbb{N}$. Consider $x=\sum_{j=1}^{d}x_j$,
with $x_j \in E_{a_j}(\theta)$; for distinct $a_1, a_2, \ldots ,a_d
\in \mathbb{R}$.

For $\epsilon \geq 0$, there exists $n$ such that $$|\exp(ia_j
nt)-1|<\frac{\epsilon}{2(\sum_{j=1}^{d}||x_j||)}.$$
So,
$$||\theta_{nt}(x)-x||=||\sum_{j=1}^{d}(\exp(ia_j
nt)-1)x_j||<\epsilon \Longrightarrow ||\theta_t(x)||=||x||.$$
Then using the compression map $z\mapsto pzp$  from $E(\theta)$ to $E(\tau )$, we see that $\tau $  restricted to $\mathcal{P}(\tau )$ is a semigroup of automorphisms of $(\mathcal{P}(\tau ) , \circ)$.
\end{proof}

As in the discrete case, we have the following formula for the extended Choi-Effros product.

\begin{thm} \label{continuous limit formula}
Let $(\mathcal{H}, \mathcal{A}, \tau )$ be a quantum Markov semigroup. Then for $x\in E_a(\tau ), y\in E_b(\tau ),$ with $a, \ b \in \mathbb{R}$;  
\begin{equation}\label{formula}
x\circ y= ~\mbox{s.}\lim_{t \to \infty} \frac{\tau_t(xy)}{\exp(iat)\exp(ibt)}.\end{equation}
\end{thm}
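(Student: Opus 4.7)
The plan is to use the minimal dilation $(\mathcal{K},\mathcal{B},\theta)$ together with Lemma \ref{strong formula} to reduce the identity to a computation involving the lifts $\widehat{x},\widehat{y}$ and the regularity of the dilation. Write $\widehat{x}\in E_a(\theta)$ and $\widehat{y}\in E_b(\theta)$ for the unique lifts of $x$ and $y$; by the definition of the extended Choi--Effros product, $x\circ y=p\widehat{x}\widehat{y}p$. Since $\theta$ is a $*$-endomorphism, $\widehat{x}\widehat{y}\in E_{a+b}(\theta)$. So the goal becomes to identify $p\widehat{x}\widehat{y}p$ with the strong limit on the right-hand side of \eqref{formula}.

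The key computational step is to expand $\theta_t(xy)$ and rescale it. Using $x=p\widehat{x}p$, $y=p\widehat{y}p$ and $p^{2}=p$, one has $xy=p\widehat{x}p\widehat{y}p$; applying $\theta_t$, which is a $*$-endomorphism, gives
\begin{equation*}
\theta_t(xy)=\theta_t(p)\theta_t(\widehat{x})\theta_t(p)\theta_t(\widehat{y})\theta_t(p).
\end{equation*}
Because $\theta_t(\widehat{x})=e^{iat}\widehat{x}$ and $\theta_t(\widehat{y})=e^{ibt}\widehat{y}$, multiplying by $e^{-i(a+b)t}$ collapses the phases, leaving
\begin{equation*}
e^{-i(a+b)t}\theta_t(xy)=\theta_t(p)\widehat{x}\theta_t(p)\widehat{y}\theta_t(p).
\end{equation*}

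Now I would sandwich by $p$ and invoke regularity (strong dilation) together with the Markov property. For a quantum Markov semigroup the minimal dilation satisfies $p\le\theta_t(p)$ for all $t\ge 0$, which yields $p\,\theta_t(p)=\theta_t(p)\,p=p$, and also $\tau_t(z)=p\theta_t(z)p$ for $z$ identified with $pzp$. Applying this to the displayed equation gives
\begin{equation*}
e^{-i(a+b)t}\tau_t(xy)=p\widehat{x}\,\theta_t(p)\,\widehat{y}\,p.
\end{equation*}

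Finally, in the Markov case $\theta_t(p)\uparrow 1_{\mathcal{K}}$ strongly as $t\to\infty$. Since strong-operator limits are preserved under left and right multiplication by bounded operators, taking the strong limit of the middle factor gives
\begin{equation*}
\mbox{s.}\lim_{t\to\infty}\frac{\tau_t(xy)}{e^{iat}e^{ibt}}=p\widehat{x}\widehat{y}p=x\circ y,
\end{equation*}
as desired. The only delicate point is the manipulation collapsing $\theta_t(p)\widehat{x}\theta_t(p)$ on the left via $p\,\theta_t(p)=p$; this is precisely where the regularity condition $\theta_t(p)\ge p$ (a consequence of the Markov property noted after Theorem \ref{PP}) is essential. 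Everything else is routine bookkeeping with the eigenvector phases and the $*$-endomorphism property of $\theta$.
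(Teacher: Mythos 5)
Your proof is correct and follows essentially the same route as the paper's: the paper's one-line proof asserts $\widehat{x}\widehat{y}=\mbox{s.}\lim_{t\to\infty} e^{-i(a+b)t}\theta_t(xy)$ and then compresses by $p$, and your expansion of $\theta_t(xy)$ via the $*$-endomorphism property together with $\theta_t(p)\uparrow 1_{\mathcal{K}}$ is exactly the justification that computation leaves implicit. Compressing by $p$ first, so that the strong limit is taken only in the middle factor $\theta_t(p)$, is a clean way to organize the same argument.
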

\begin{proof}
With notations as above,
$$x\circ y= p\widehat{x}.\widehat{y}p
=p(~\mbox{s.}\lim_{t \to \infty}\frac{\theta_t(xy)}{\exp(iat)\exp(ibt)})p=~\mbox{s.}
   \lim_{t \to \infty}\frac{\tau_t(xy)}{\exp(iat)\exp(ibt)},$$
   by the dilation property.
\end{proof}

In the following, let $(\mathcal{H}, \mathcal{A}, \tau )$ be a quantum Markov semigroup.

\begin{cor}
    If the von Neumann algebra $\mathcal{A}$ is abelian, then the peripheral Poisson boundary of the QMS $\{\tau_t:  t\in \mathbb{R}_+\}$ on $\mathcal{A}$ is also abelian.
\end{cor}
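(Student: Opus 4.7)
The plan is to deduce the corollary directly from the strong-limit formula for the extended Choi-Effros product established in Theorem \ref{continuous limit formula}. Since $\mathcal{A}$ is abelian, the ordinary product on $\mathcal{A}$ is commutative, and the idea is that commutativity of the underlying product transfers to $\circ$ through the formula, first on pure eigenvectors, then by bilinearity on $E(\tau)$, and finally by continuity on $\mathcal{P}(\tau)$.

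First, I would fix $a,b\in\mathbb{R}$ and take $x\in E_a(\tau)$, $y\in E_b(\tau)$. By Theorem \ref{continuous limit formula},
$$x\circ y=~\mbox{s.}\lim_{t\to\infty}\frac{\tau_t(xy)}{\exp(iat)\exp(ibt)}\qquad\text{and}\qquad y\circ x=~\mbox{s.}\lim_{t\to\infty}\frac{\tau_t(yx)}{\exp(ibt)\exp(iat)}.$$
Because $\mathcal{A}$ is abelian, $xy=yx$, so the two limits coincide and $x\circ y=y\circ x$ on homogeneous eigenvectors.

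Next, I would extend this to all of $E(\tau)$ by bilinearity of $\circ$. Any $u,v\in E(\tau)$ can be written as finite sums $u=\sum_i u_i$ and $v=\sum_j v_j$ with each $u_i, v_j$ a peripheral eigenvector, and then $u\circ v=\sum_{i,j}u_i\circ v_j=\sum_{i,j}v_j\circ u_i=v\circ u$.

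Finally, I would pass from $E(\tau)$ to its norm closure $\mathcal{P}(\tau)$ using joint norm-continuity of the $C^*$-product $\circ$ on $(\mathcal{P}(\tau),\circ)$, which was already established after Theorem \ref{Main 4}. Given $x,y\in\mathcal{P}(\tau)$ and sequences $x_n, y_n\in E(\tau)$ with $x_n\to x$, $y_n\to y$ in norm, we have $x_n\circ y_n\to x\circ y$ and $y_n\circ x_n\to y\circ x$; since $x_n\circ y_n=y_n\circ x_n$ for every $n$, the limits coincide. No step appears to be a genuine obstacle here; the only thing to be slightly careful about is making sure the bilinearity step is legitimate, but this is immediate since $\circ$ is a $C^*$-algebra product and thus bilinear.
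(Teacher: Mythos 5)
Your proof is correct and follows the same route as the paper, which simply cites the strong-limit formula of Theorem \ref{continuous limit formula}; you have merely spelled out the (routine) bilinearity and norm-continuity steps that the paper leaves implicit. No issues.
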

\begin{proof} This is clear from the formula for the extended Choi-Effros product in
(\ref{formula}). 
\end{proof}

\begin{cor}
   For $a,b \in \mathbb{R}$,  if $x \in E_a(\tau)$, $y \in E_b(\tau)$, then $x\circ y \in E_{a+b}(\tau)$.
\end{cor}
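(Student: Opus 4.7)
The plan is to proceed along either of two essentially equivalent routes, and I would present the shorter one based on the dilation.

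First, lift everything to the minimal dilation $(\mathcal{K},\mathcal{B},\theta)$. By Lemma \ref{strong formula}, $x\in E_a(\tau)$ and $y\in E_b(\tau)$ have unique lifts $\widehat{x}\in E_a(\theta)$ and $\widehat{y}\in E_b(\theta)$, so
$$\theta_t(\widehat{x})=e^{iat}\widehat{x},\qquad \theta_t(\widehat{y})=e^{ibt}\widehat{y},\qquad \forall t\in\mathbb{R}_+.$$
Since each $\theta_t$ is a $*$-endomorphism, hence multiplicative, the product $\widehat{x}\widehat{y}\in\mathcal{B}$ satisfies $\theta_t(\widehat{x}\widehat{y})=\theta_t(\widehat{x})\theta_t(\widehat{y})=e^{i(a+b)t}\widehat{x}\widehat{y}$, so $\widehat{x}\widehat{y}\in E_{a+b}(\theta)$. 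By the converse half of Lemma \ref{strong formula}, the compression $p(\widehat{x}\widehat{y})p$ lies in $E_{a+b}(\tau)$. But by the definition of the extended Choi--Effros product, $x\circ y=p\widehat{x}\widehat{y}p$, which gives the conclusion.

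Alternatively, one can argue directly from the limit formula (\ref{formula}) in Theorem \ref{continuous limit formula}. Fix $s\in\mathbb{R}_+$ and apply $\tau_s$, using that $\tau_s$ is normal (hence SOT-continuous on bounded sets, and the net $e^{-i(a+b)t}\tau_t(xy)$ is norm-bounded by $\|xy\|$):
$$\tau_s(x\circ y)=\mbox{s.}\lim_{t\to\infty}\frac{\tau_{s+t}(xy)}{e^{i(a+b)t}}=e^{i(a+b)s}\,\mbox{s.}\lim_{t\to\infty}\frac{\tau_{s+t}(xy)}{e^{i(a+b)(s+t)}}=e^{i(a+b)s}(x\circ y),$$
after reindexing $t\mapsto s+t$. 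This shows $x\circ y\in E_{a+b}(\tau)$.

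The only point requiring any care is the second approach's appeal to SOT-continuity of $\tau_s$ on the bounded net, but since the dilation route sidesteps this entirely, I would opt for the dilation-based proof as the cleanest presentation.
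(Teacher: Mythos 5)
Your proposal is correct, and in fact contains two valid proofs; the paper's own argument is your \emph{second} one, not the one you chose to lead with. The paper simply applies $\tau_s$ to the limit formula (\ref{formula}) and reindexes, exactly as in your alternative route, without commenting on why $\tau_s$ passes through the strong limit. Your dilation-based argument is a genuinely different (and arguably cleaner) route: since $\theta_t$ is multiplicative, $\widehat{x}\widehat{y}$ is manifestly a peripheral eigenvector of $\theta$ with eigenvalue $e^{i(a+b)t}$, and the converse half of Lemma \ref{strong formula} compresses it back to $E_{a+b}(\tau)$; this buys you freedom from any continuity considerations. What the paper's route buys instead is independence from the dilation machinery at the point of use --- it needs only the limit formula --- at the cost of the interchange of $\tau_s$ with an SOT limit, which you correctly flag as the one delicate step (it is justified because a normal CP map is strongly continuous on norm-bounded nets, via the Kadison--Schwarz inequality). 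Either proof would be acceptable; your instinct to prefer the dilation argument as the cleanest presentation is sound, and your identification of the hidden hypothesis in the limit-formula argument is a point the paper itself glosses over.
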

\begin{proof} Using  formula (\ref{formula}),

 \begin{eqnarray*} \tau_s(x \circ y)&=&~\mbox{s.} \lim_{t \to \infty}\frac{\tau_s\tau_t(xy)}{\exp ia(s+t)\exp ib(s+t)} \exp(ias)\exp(ibs)\\&=&\exp [i (a+b)s ](x\circ y).\end{eqnarray*}
Hence $x\circ y\in E_{a+b}(\tau ).$ 
\end{proof}

\begin{eg}
 In Example \ref{Schur}, the matrix units $E_{11}, E_{12}, E_{21}, E_{22}$
are eigenvectors of $\tau _t$ with eigenvalues $1, \exp{(ct)}, \exp{\bar{c}t}, 1$ respectively.  If Re$(c)<0$, then $\mathcal{P}(\tau )$ consists of only diagonal matrices and the peripheral Poisson boundary is just the $C^*$-algebra $\mathbb{C}^2$ of diagonal matrices.
On the other hand, if Re$(c)=0$, $\tau $ is a semigroup of automorphisms. Here, all the eigenvalues are peripheral. The peripheral Poisson boundary is the whole of $M_2(\mathbb{C})$, with the usual matrix multiplication.

Observe that in general if $\tau $ is a semigroup of automorphisms of a finite-dimensional von Neumann algebra, then the peripheral Poisson boundary is the algebra itself. 
\end{eg}

The following is a continuous version of Example \ref{C}.

\begin{eg}Let $q\geq 0$ be an element in a von Neumann algebra $\mathcal{A}$ with $\|q\|=1.$  Let $\varphi $ be a normal state on $\mathcal{A}$ satisfying 
$\varphi(q)=1.$ Now, we define the QDS $\tau=\{\tau_t\}$ as follows: For $a>0$,
$$\tau _t(x) =e^{-at}x+(1-e^{-at})\varphi(x)q, ~~x\in \mathcal{A}.$$
Clearly, $q$ is a fixed point for $\tau=\{\tau_t\}$. In  fact, the peripheral Poisson boundary of $\tau$ is the one-dimensional space $\mathbb{C}q$, isomorphic to $\mathbb{C}$.
\end{eg}

\begin{eg}
  Consider the von Neumann algebra $\mathcal{A}=L^\infty(\mathbb{T})$, and the unital QDS $\tau=\{\tau_t\}$ defined as: $$\tau_t(f)(z)=f(\exp{(it)}z), \ \forall f\in L^\infty(\mathbb{T}), \ z\in \mathbb{T}.$$
  Clearly,  $z^a \in E_a(\tau), \ \forall a \in \mathbb{R}$. By Stone-Weierstrass theorem, this implies $C(\mathbb{T})\subseteq \mathcal{P}(\tau)$.
Computing the extended Choi-Effros product using the formula as above, we have $z^a \circ z^b = z^{a+b}, \ \forall a, b \in \mathbb{R}$. This is not surprising, as the extended Choi-Effros product is the same as the original product for any endomorphism semigroup. The peripheral Poisson boundary is not the whole of $\mathcal{A}$ only because in our definition of the boundary we have considered closure only in norm and not in any weaker topologies. 
\end{eg}

\begin{thm} Let $\tau $ be a quantum Markov semigroup. For $a \in \mathbb{R}$, take $$I_a(\tau)=~\overline{\mbox{span}}~\{x^*\circ y: x,y \in E_a(\tau)\}.$$ 
   \item[(i)]  $(I_a(\tau), \circ)$ is a closed two-sided ideal of the  Poisson boundary  $(F(\tau), \circ)$.
   \item[(ii)] The mapping $x \to x^*$ from $E_a(\tau)$ onto $E_{-a} (\tau)$ is an anti-isomorphism.
\end{thm}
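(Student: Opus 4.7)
My plan is to establish (ii) first, since its anti-isomorphism identity is the key tool needed in the left ideal argument of (i). For (ii), if $x\in E_a(\tau)$ then $\tau_t(x)=e^{iat}x$, and taking adjoints gives $\tau_t(x^*)=(\tau_t(x))^*=e^{-iat}x^*$, so adjunction sends $E_a(\tau)$ to $E_{-a}(\tau)$; it is involutive and hence bijective. To see it reverses the extended Choi-Effros product, I apply Theorem \ref{continuous limit formula}: for $x,y\in E_a(\tau)$,
\[
(x\circ y)^*=\Bigl(\text{s-}\lim_{t\to\infty}\frac{\tau_t(xy)}{e^{2iat}}\Bigr)^*=\text{s-}\lim_{t\to\infty}\frac{\tau_t(y^*x^*)}{e^{-2iat}}=y^*\circ x^*,
\]
where the last equality uses the same limit formula for $y^*,x^*\in E_{-a}(\tau)$. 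This is exactly the anti-homomorphism condition.

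For (i), I first note that $I_a(\tau)\subseteq F(\tau)$: by (ii), $x^*\in E_{-a}(\tau)$, and by the preceding grading corollary $E_{-a}(\tau)\circ E_a(\tau)\subseteq E_0(\tau)=F(\tau)$. Closedness in $F(\tau)$ is built into the definition as a norm-closed span. It remains to verify the two-sided ideal property. Since $(\mathcal{P}(\tau),\circ)$ is a $C^*$-algebra, $\circ$ is associative, and by density it suffices to show that for every $z\in F(\tau)$ and every generator $x^*\circ y$ with $x,y\in E_a(\tau)$, both $z\circ(x^*\circ y)$ and $(x^*\circ y)\circ z$ lie in $I_a(\tau)$.

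The right-multiplication case is immediate from associativity and grading: $(x^*\circ y)\circ z=x^*\circ(y\circ z)$, and since $y\circ z\in E_a(\tau)\circ E_0(\tau)\subseteq E_a(\tau)$, this has the form $x^*\circ y'$ with $y'\in E_a(\tau)$. The left-multiplication case is where (ii) is used: by associativity, $z\circ(x^*\circ y)=(z\circ x^*)\circ y$, and the anti-isomorphism identity from (ii) gives $z\circ x^*=(x\circ z^*)^*$; moreover $z^*\in F(\tau)=E_0(\tau)$ because $F(\tau)$ is $*$-closed, so $x':=x\circ z^*\in E_a(\tau)\circ E_0(\tau)\subseteq E_a(\tau)$. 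Hence $z\circ(x^*\circ y)=(x')^*\circ y\in I_a(\tau)$. Closing under linear combinations and taking norm limits preserves these containments, completing the proof.

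No step is genuinely hard here; the only place requiring thought is the left ideal property, and the main obstacle is simply noticing that one must reduce it to the right ideal side via the adjoint and the anti-isomorphism from (ii), using the fact that $F(\tau)$ is closed under $*$ so that $z^*$ is still available as a $0$-eigenvector with which to multiply $x$ on the right.
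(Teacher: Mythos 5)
Your proposal is correct, and it is essentially the intended argument: the paper itself does not write out a proof but simply defers to the discrete-case proof of Theorem 2.9 in \cite{BKT1}, and your route --- the grading corollary $E_a(\tau)\circ E_b(\tau)\subseteq E_{a+b}(\tau)$ plus the adjoint anti-isomorphism, used to reduce the left-ideal property to the right-ideal one --- is exactly that argument transported to continuous time. One small step deserves more care: in your verification of $(x\circ y)^*=y^*\circ x^*$ you pass the adjoint through a strong operator limit, and the adjoint is not SOT-continuous on $\mathscr{B}(\mathcal{H})$. The identity is still true, but you should justify it either by noting that both strong limits exist (the second by applying the limit formula to $y^*,x^*\in E_{-a}(\tau)$, as you do) and that they must coincide because the adjoint \emph{is} WOT-continuous and weak limits are unique; or, more cleanly, by working in the dilation: uniqueness of lifts gives $\widehat{x^*}=(\widehat{x})^*$, so $(x\circ y)^*=(p\widehat{x}\widehat{y}p)^*=p\,\widehat{y^*}\,\widehat{x^*}\,p=y^*\circ x^*$ with no limit interchange at all. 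With that repair the proof is complete.
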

\begin{proof}
   The proof follows as in Theorem 2.9, of \cite{BKT1} replacing $\theta$ by the quantum Markov semigroup $\theta=\{\theta_t\}$.
\end{proof}

\begin{cor}\label{dimension}
   Fix $a \in \mathbb{R}$. (i) If there exists $v_1, v_2 \in E_a(\tau)$, such that $v_1^*\circ v_2 = 1$, then
   $$E_a(\tau)=\{ x\circ v_2: x \in F(\tau)\}=\{x\circ v_1: x \in F(\tau)\}.$$
(ii) If there exists $v_1, v_2 \in E_a(\tau)$, such that $v_1 \circ v_2^*=1$, then $$E_a(\tau)=\{ v_2 \circ x: x \in F(\tau)\}=\{v_1 \circ x: x \in F(\tau)\}.$$
In either of these cases,  $\dim(E_a(\tau) )=\dim(F(\tau))$.
\end{cor}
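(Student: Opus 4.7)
I plan to exploit three earlier results: (a) $(\mathcal{P}(\tau), \circ)$ is a unital $C^*$-algebra with $F(\tau)$ as its $\circ$-subalgebra of fixed points; (b) the grading $E_a(\tau) \circ E_b(\tau) \subseteq E_{a+b}(\tau)$ from the preceding corollary; and (c) the $C^*$-involution on $\mathcal{P}(\tau)$ maps $E_a(\tau)$ bijectively onto $E_{-a}(\tau)$ (from the preceding theorem).

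For part (i), since $\mathcal{P}(\tau)$ is a unital $C^*$-algebra, taking adjoints in $v_1^* \circ v_2 = 1$ gives the companion identity $v_2^* \circ v_1 = 1$. For any $z \in E_a(\tau)$, associativity of $\circ$ yields
\[
z \;=\; z \circ (v_1^* \circ v_2) \;=\; (z \circ v_1^*) \circ v_2,
\]
with $z \circ v_1^* \in E_a(\tau) \circ E_{-a}(\tau) \subseteq F(\tau)$ by (b) and (c). Hence $E_a(\tau) \subseteq F(\tau) \circ v_2$, and the reverse inclusion follows from $F(\tau) \circ E_a(\tau) \subseteq E_a(\tau)$. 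The same argument using $v_2^* \circ v_1 = 1$ in place of $v_1^* \circ v_2 = 1$ gives $E_a(\tau) = F(\tau) \circ v_1$. Part (ii) is parallel: the adjoint of $v_1 \circ v_2^* = 1$ is $v_2 \circ v_1^* = 1$, and for $z \in E_a(\tau)$ I write $z = (v_2 \circ v_1^*) \circ z = v_2 \circ (v_1^* \circ z)$ with $v_1^* \circ z \in F(\tau)$, swapping $v_1 \leftrightarrow v_2$ for the other equality.

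For the dimension equality in case (i), consider the linear maps $\phi \colon F(\tau) \to E_a(\tau)$, $\phi(x) = x \circ v_2$, and $\psi \colon E_a(\tau) \to F(\tau)$, $\psi(z) = z \circ v_1^*$. The computation above shows $\phi \circ \psi = \mathrm{id}_{E_a(\tau)}$, so $\phi$ is surjective and $\psi$ is an injective linear section, exhibiting $E_a(\tau)$ as a linear retract of $F(\tau)$. The analogous construction in case (ii) uses left multiplication: $\phi(x) = v_1 \circ x$ and $\psi(z) = v_2^* \circ z$.

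The delicate point I anticipate is upgrading the retract to a linear isomorphism: a surjection $\phi$ with an injective section $\psi$ only gives $\dim E_a(\tau) \le \dim F(\tau)$ in general, since the composition $\psi \circ \phi$ is right-multiplication by the idempotent $e := v_2 \circ v_1^* \in F(\tau)$, and equality of dimensions amounts to $e = 1$. I expect to close this gap by exploiting the adjoint symmetry of the hypothesis (the companion identity $v_2^* \circ v_1 = 1$ makes the roles of $v_1$ and $v_2$ interchangeable), producing a second pair of section-like maps whose compositions provide the reverse constraint on $e$ and force $e = 1$, so that $\phi$ and $\psi$ are mutually inverse and $\dim E_a(\tau) = \dim F(\tau)$.
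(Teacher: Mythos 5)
Your treatment of the set equalities in (i) and (ii) is correct and is essentially the paper's argument: associativity of $\circ$, the grading $E_a(\tau)\circ E_b(\tau)\subseteq E_{a+b}(\tau)$, and the fact that the involution sends $E_a(\tau)$ onto $E_{-a}(\tau)$ do all the work, and the adjoint identity $v_2^*\circ v_1=1$ legitimately gives the second description $F(\tau)\circ v_1$.

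The dimension argument, however, has a genuine gap, and the repair you propose will not work. The idempotent $e=v_2\circ v_1^*$ need \emph{not} equal $1$: a one-sided inverse in a unital $C^*$-algebra is not automatically two-sided, and the ``companion identity'' $v_2^*\circ v_1=1$ is merely the adjoint of the hypothesis $v_1^*\circ v_2=1$, so it carries no independent information that could constrain $e$. Concretely, take $\tau_t=\mathrm{id}$ on $\mathscr{B}(\mathcal{H})$, $a=0$, and $v_1=v_2=S$ a proper isometry: then $v_1^*\circ v_2=S^*S=1$ while $e=SS^*\neq 1$, and your map $\phi(x)=x\circ v_2=xS$ is surjective onto $E_0(\tau)=F(\tau)$ but kills $1-SS^*$, so $\phi$ and $\psi$ are genuinely not mutually inverse. (The corollary is of course still true there, trivially.) The way to close the gap --- and what the paper actually does --- is to abandon the attempt to produce a single isomorphism and instead exhibit two injections in \emph{opposite} directions, one by left multiplication and one by right multiplication: $x\mapsto v_2\circ x$ is injective from $F(\tau)$ into $E_a(\tau)$ because $v_1^*\circ(v_2\circ x)=x$, giving $\dim F(\tau)\leq\dim E_a(\tau)$; and $z\mapsto z\circ v_1^*$ is injective from $E_a(\tau)$ into $F(\tau)$ because $(z\circ v_1^*)\circ v_2=z$, giving the reverse inequality. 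Your proposal only ever uses right multiplications in case (i), which is why both of your maps yield the same inequality $\dim E_a(\tau)\leq\dim F(\tau)$ and you are left needing $e=1$.
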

\begin{proof} For $(i)$, consider $v_1, v_2\in E_a(\tau )$ such that $v_1^* \circ v_2=1.$ First, we show that $\{ x\circ v_2: x \in F(\tau)\}\subseteq E_a(\tau)$. If $x \in F(\tau)$; $x\circ v_2 \in E_a(\tau)$, i.e, $\tau_s(x \circ
v_2)=\exp(ias) (x\circ v_2), \ \forall s \in \mathbb{R}$. Next, we want to show that $E_a(\tau)\subseteq \{ x \circ v_2: x \in F(\tau)\}.$ For $y \in E_a(\tau)$, $y\circ v_1^* \in F(\tau)$, $(y\circ v_1^*)\circ v_2=y$.
It follows similarly for $(ii)$.

Now, we prove that $\dim(E_a(\tau))=\dim(F(\tau))$.
If $v_1^*\circ v_2=1$, and $x_1, x_2, ... , x_n \in F(\tau)$ are linearly independent, then $v_2\circ x_1, ... , v_2\circ x_n$ are linearly independent on $E_a(\tau)$.
Conversely, if $y_1, y_2,... y_n \in E_a(\tau)$ are linearly
independent, $y_1 \circ v_1^*, ..., y_n\circ v_1^*$ are linearly
independent in $F(\tau)$. Hence, the equality in dimensions.
\end{proof}

\begin{cor}
    If $F(\tau)$ is 1 dimensional and $E_a(\tau)$ is non-trivial, then $E_a(\tau)$ is also one-dimensional.
\end{cor}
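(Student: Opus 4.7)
The plan is to reduce the statement to Corollary \ref{dimension}(i). Since $\tau$ is a quantum Markov semigroup, $1_{\mathcal{A}} \in F(\tau)$, and the hypothesis $\dim F(\tau) = 1$ forces $F(\tau) = \mathbb{C} \cdot 1_{\mathcal{A}}$ with $1_{\mathcal{A}}$ serving as the unit of the Poisson boundary. It is therefore enough to exhibit elements $v_1, v_2 \in E_a(\tau)$ with $v_1^* \circ v_2 = 1_{\mathcal{A}}$, for then Corollary \ref{dimension} yields immediately $\dim E_a(\tau) = \dim F(\tau) = 1$.

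Pick any nonzero $v \in E_a(\tau)$, which exists by the non-triviality assumption. The theorem preceding Corollary \ref{dimension} shows that $x \mapsto x^*$ sends $E_a(\tau)$ into $E_{-a}(\tau)$ and that $I_a(\tau)$ is contained in $F(\tau)$; in particular, $v^* \circ v$ lies in $F(\tau)$ and is a positive element of the $C^*$-algebra $(\mathcal{P}(\tau), \circ)$. Because $F(\tau) = \mathbb{C} \cdot 1_{\mathcal{A}}$, positivity forces $v^* \circ v = c \cdot 1_{\mathcal{A}}$ for some $c \geq 0$. Once $c > 0$ is established, setting $v_1 = v_2 = v/\sqrt{c}$ gives $v_1^* \circ v_2 = 1_{\mathcal{A}}$ and the argument is complete.

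The only step demanding genuine care is the non-vanishing of $v^* \circ v$. For this I would invoke the lifting $x \mapsto \widehat{x}$ from $\mathcal{P}(\tau)$ to $\mathcal{P}(\theta)$ provided by Lemma \ref{strong formula} and Theorem \ref{Main 4}: this map is a complete isometric bijection of operator systems, so the nonzero element $v$ lifts to a nonzero $\widehat{v} \in \mathcal{P}(\theta)$. Since $\mathcal{P}(\theta)$ is a genuine $C^*$-subalgebra of $\mathcal{B}$, the $C^*$-identity gives $\widehat{v}^* \widehat{v} \neq 0$. By the very definition of the extended Choi-Effros product, $v^* \circ v = p(\widehat{v}^*\widehat{v})p$, and since the compression is an isometry on $\mathcal{P}(\theta)$, we conclude $v^* \circ v \neq 0$, so $c > 0$ as required. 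The anticipated obstacle is exactly this transfer between the possibly non-multiplicative operator system $\mathcal{P}(\tau)$ and the honest $C^*$-algebra $\mathcal{P}(\theta)$ where positivity and the $C^*$-identity can be applied without ambiguity.
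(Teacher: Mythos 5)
Your proof is correct and follows essentially the same route as the paper's: reduce to Corollary \ref{dimension} by observing that $v^*\circ v\in F(\tau)=\mathbb{C}\cdot 1$ and normalizing. The only difference is that you explicitly justify the normalization step (that $v^*\circ v\neq 0$, via the lift to the genuine $C^*$-algebra $\mathcal{P}(\theta)$ and the $C^*$-identity), a point the paper passes over with ``without loss of generality''; this is a worthwhile clarification but not a different argument.
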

\begin{proof}
 For $\tau$, QMS,  $F(\tau)$ is one-dimensional, implies $F(\tau)=\mathbb{C} 1.$ If $x\in E_a(\tau)$, then $ x^* \circ x \in F(\tau)$.  Without loss of generality, let  $x^*\circ x=1$. Then by Corollary \ref{dimension}, $\mbox{dim}(E_a(\tau))=\mbox{dim}(F(\tau))$.
\end{proof}

\begin{defn}
A quantum Markov semigroup $\tau=\{\tau_t\}$ is said to be {\em  peripherally automorphic\/} if $x
\circ y=xy, ~\forall x,y \in \mathcal{P}(\tau)$.
\end{defn}

\begin{defn}
    The {\em multiplicative domain\/}  of a quantum Markov semigroup $\tau=\{\tau_t\}$ is defined as

$\mathcal{M}(\tau ):=\{x \in \mathcal{A}: \tau_t(xy)=\tau_t(x)\tau_t(y), \tau_t(yx)=\tau_t(y)\tau_t(x); \forall y \in \mathbb{M}_d , t \in \mathbb{R}_+\}$.
\end{defn}

\begin{thm}
Let $(\mathcal{H}, \mathcal{A}, \tau )$ be a quantum Markov semigroup.  Then the following are equivalent:
\begin{itemize}
\item[(i)] $\mathcal{P}(\tau) \subseteq \mathcal{M}(\tau )$; \item[(ii)] $\tau$ is peripherally automorphic;\item[(iii)] For every $a \in \mathbb{R}, x \in E_a(\tau) \Longrightarrow x^*x\in F(\tau ).$
\end{itemize}
\end{thm}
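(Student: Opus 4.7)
The plan is to establish the cyclic chain of implications (i)$\Rightarrow$(ii)$\Rightarrow$(iii)$\Rightarrow$(i), using the extended Choi--Effros product formula from Theorem \ref{continuous limit formula}, the corollary that $E_a(\tau)\circ E_b(\tau)\subseteq E_{a+b}(\tau)$, and the standard fact about multiplicative domains of UCP maps.

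For (i)$\Rightarrow$(ii), I would take $x\in E_a(\tau)$ and $y\in E_b(\tau)$. By the hypothesis that peripheral eigenvectors lie in the multiplicative domain, $\tau_t(xy)=\tau_t(x)\tau_t(y)=e^{i(a+b)t}xy$ for every $t\geq 0$. Plugging this directly into the strong-limit formula (\ref{formula}) gives $x\circ y=xy$. Since the Choi--Effros product and the original product agree on $E(\tau)$, a standard norm-continuity argument extends the equality to all of $\mathcal{P}(\tau)$, yielding peripheral automorphy.

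For (ii)$\Rightarrow$(iii), I would observe first that if $x\in E_a(\tau)$ then $x^*\in E_{-a}(\tau)$ (take adjoints in $\tau_t(x)=e^{iat}x$, using that $\tau_t$ is $*$-preserving). By the Corollary following Theorem \ref{continuous limit formula}, $x^*\circ x\in E_{-a+a}(\tau)=E_0(\tau)=F(\tau)$. Peripheral automorphy then gives $x^*x=x^*\circ x\in F(\tau)$.

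The implication (iii)$\Rightarrow$(i) is the part I expect to require the most care. Fix $x\in E_a(\tau)$ and $t\geq 0$. Since $\tau_t(x)=e^{iat}x$, we compute $\tau_t(x)^*\tau_t(x)=x^*x$, and hypothesis (iii) gives $\tau_t(x^*x)=x^*x$, so $\tau_t(x^*x)=\tau_t(x)^*\tau_t(x)$. The symmetric identity $\tau_t(xx^*)=\tau_t(x)\tau_t(x)^*$ follows by applying (iii) to $x^*\in E_{-a}(\tau)$. By Choi's characterization of the multiplicative domain of a UCP (here unital CP) map, these two equalities force $x$ to be a multiplier for $\tau_t$, i.e.\ $\tau_t(xy)=\tau_t(x)\tau_t(y)$ and $\tau_t(yx)=\tau_t(y)\tau_t(x)$ for every $y\in\mathcal{A}$. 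Varying $t$, we obtain $E(\tau)\subseteq\mathcal{M}(\tau)$. Finally, since each $\mathcal{M}(\tau_t)$ is norm-closed (the defining identities are norm-continuous in $x$) and $\mathcal{M}(\tau)$ is an intersection of such sets, taking the norm closure of $E(\tau)$ yields $\mathcal{P}(\tau)\subseteq\mathcal{M}(\tau)$, which is (i). The only subtlety here is the appeal to Choi's theorem, which requires $\tau_t$ to be unital; this is exactly the Markov hypothesis on $\tau$, so no additional work is needed.
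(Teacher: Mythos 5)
Your proposal is correct and follows essentially the same route as the paper: (i)$\Rightarrow$(ii) via the strong-limit formula (\ref{formula}) combined with multiplicativity, (ii)$\Rightarrow$(iii) via $x^*\circ x=x^*x\in F(\tau)$, and (iii)$\Rightarrow$(i) via the Choi/Schwarz characterization of the multiplicative domain, which is exactly the ``simple calculation'' the paper leaves implicit. Your version is slightly more detailed on the last implication (spelling out $\tau_t(x)^*\tau_t(x)=x^*x$, the passage from eigenvectors to their span via the algebra structure of $\mathcal{M}(\tau_t)$, and norm-closedness), but there is no substantive difference in method.
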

\begin{proof}
   Assuming (i) to be true. Suppose $x\in E_{a}(\tau )$ and $y\in E_b(\tau )$, for some $a,b \in \mathbb{R}$. Then
$$x\circ y= s.\lim_{t \to \infty}
\frac{\tau_t(xy)}{\exp(iat)\exp(ibt)}=s.\lim_{t \to \infty}
\frac{\tau_t(x)\tau_t(y)}{\exp(iat)\exp(ibt)}= xy.$$
So, (ii) follows. 

Assuming (ii) to be true.
 Note that $x^*\circ x= x^*x$, due to $\tau$ being peripherally automorphic. Then combining it with $\tau_t(x^*\circ x)=x^*\circ x; \forall x\in \mathcal{P}(\tau)$ gives us (iii).

Now assume (iii). Some simple calculations show that
$$\mathcal{M} (\tau )=\{x \in \mathcal{A}: \tau_t(xx^*)=\tau_t(x)\tau_t(x^*),
\tau_t(x^*x)=\tau_t(x^*)\tau_t(x); \forall , t \in \mathbb{R}_+\}.$$
Hence, (iii) implies (i).    
\end{proof}

\subsection{Quantum dynamical semigroups} Here we extend the theory of peripheral Poisson boundary to general quantum dynamical semigroups on von Neumann algebras, which, by our assumption, are contractive but not necessarily unital. This extension can be achieved as in the discrete case by a unitization trick.

Suppose $(\mathcal{H}, \mathcal{A}, \tau)$ is a quantum dynamical semigroup, where it is assumed that $\tau$ is not unital. Now take
$\widetilde{\mathcal{H}}= \mathcal{H}\oplus \mathbb{C}$, $\widetilde{\mathcal{A}}=\mathcal{A}\oplus \mathbb{C}.$ For $t\in \mathbb{R}_+$, define $\widetilde{\tau _t}:\widetilde{\mathcal{A}}\to \widetilde{\mathcal{A}},$ by
$$\widetilde{\tau _t}(x\oplus c)= (\tau _t(x)+ c(1-\tau _t(1))\oplus c, ~~x\oplus c\in \mathcal{A}\oplus \mathbb{C}. $$
It is easily verified that $(\widetilde{\mathcal{H}}, \widetilde{\mathcal{A}}, \widetilde{\tau })$ is a quantum Markov semigroup. The minimal dilation of $(\mathcal{H}, \mathcal{A}, \tau )$ can be constructed using the minimal dilation of 
$(\widetilde{\mathcal{H}}, \widetilde{\mathcal{A}}, \widetilde{\tau }).$ See \cite{Bha99} for details. Now we can mimic the discrete case. We just state the results without proofs.

\begin{thm} Let
$(\mathcal{H}, \mathcal{A}, \tau )$ be a quantum dynamical semigroup. Let $(\mathcal{K}, \mathcal{B}, \theta ) $ be its minimal dilation and $p$ be the projection of $\mathcal{K}$ onto $\mathcal{H}.$ For $x\in E_a(\tau ), a\in \mathbb{R}$ there exists a unique $\widehat{x}\in E_a(\theta )$ such that $x=p\widehat{x}p.$ The map $x\mapsto \widehat{x}$ extends to a complete isometric isomorphism of the operator space $E(\tau )$ to $E(\theta ).$
\end{thm}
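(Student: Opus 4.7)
The plan is to mimic the discrete non-unital case by passing to the unitized quantum Markov semigroup $(\widetilde{\mathcal{H}},\widetilde{\mathcal{A}},\widetilde{\tau})$, whose minimal dilation decomposes as $\widetilde{\mathcal{K}}=\mathcal{K}\oplus\mathbb{C}$, $\widetilde{\mathcal{B}}=\mathcal{B}\oplus\mathbb{C}$, with
\[
\widetilde{\theta}_t(b\oplus c)=\bigl(\theta_t(b)+c(1_{\mathcal{K}}-\theta_t(1_{\mathcal{K}}))\bigr)\oplus c.
\]
The key observations are $\widetilde{\tau}_t(x\oplus 0)=\tau_t(x)\oplus 0$ and $\widetilde{\theta}_t(b\oplus 0)=\theta_t(b)\oplus 0$, so the inclusions $x\mapsto x\oplus 0$ isometrically carry $E_a(\tau)$ into $E_a(\widetilde{\tau})$ and $E_a(\theta)$ into $E_a(\widetilde{\theta})$.

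First I will handle the existence and uniqueness of the lift for each fixed $a\in\mathbb{R}$. Given $x\in E_a(\tau)$, applying Lemma~\ref{strong formula} to $x\oplus 0\in E_a(\widetilde{\tau})$ produces the unique $\widehat{x\oplus 0}\in E_a(\widetilde{\theta})$ given by
\[
\widehat{x\oplus 0}=\mbox{s.}\lim_{t\to\infty}e^{-iat}\widetilde{\theta}_t(x\oplus 0)=\mbox{s.}\lim_{t\to\infty}\bigl(e^{-iat}\theta_t(x)\bigr)\oplus 0.
\]
This forces the limit to have the form $\widehat{x}\oplus 0$, with $\widehat{x}:=\mbox{s.}\lim_{t\to\infty}e^{-iat}\theta_t(x)\in\mathcal{B}$ satisfying $\theta_t(\widehat{x})=e^{iat}\widehat{x}$ and $p\widehat{x}p=x$. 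For uniqueness, any other $y\in\mathcal{B}$ with $\theta_t(y)=e^{iat}y$ and $pyp=x$ gives $y\oplus 0\in\widetilde{\mathcal{B}}$ with $\widetilde{\theta}_t(y\oplus 0)=e^{iat}(y\oplus 0)$ compressing to $x\oplus 0$, so the QMS uniqueness forces $y\oplus 0=\widehat{x}\oplus 0$, i.e.\ $y=\widehat{x}$.

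Next I will establish the complete isometric isomorphism of $E(\tau)$ with $E(\theta)$. First record the continuous analogue of~(\ref{eigenvectors}): for $a\neq 0$, comparing second coordinates in $\widetilde{\tau}_t(x\oplus c)=e^{iat}(x\oplus c)$ forces $c=0$; for $a=0$, any fixed point $x\oplus c$ of $\widetilde{\tau}$ yields $\tau_t(x-c\cdot 1_{\mathcal{H}})=x-c\cdot 1_{\mathcal{H}}$, giving the decomposition $x\oplus c=(x-c\cdot 1_{\mathcal{H}})\oplus 0+c(1_{\mathcal{H}}\oplus 1)$. Hence
\[
E(\widetilde{\tau})=(E(\tau)\oplus 0)+\mathbb{C}(1_{\mathcal{H}}\oplus 1),
\]
and analogously $E(\widetilde{\theta})=(E(\theta)\oplus 0)+\mathbb{C}(1_{\mathcal{K}}\oplus 1)$. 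By Theorem~\ref{Main 4}, the lift $\widetilde{\mathcal{L}}:E(\widetilde{\tau})\to E(\widetilde{\theta})$ is a complete isometric isomorphism, and by the preceding paragraph it sends $y\oplus 0$ to $\widehat{y}\oplus 0$ and $1_{\mathcal{H}}\oplus 1$ to $1_{\mathcal{K}}\oplus 1$. Restricting $\widetilde{\mathcal{L}}$ to the $E(\tau)\oplus 0$ summand and stripping off the trivial factor produces the desired complete isometric isomorphism $x\mapsto\widehat{x}$ from $E(\tau)$ onto $E(\theta)$, whose inverse is visibly the compression $z\mapsto pzp$.

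The only genuine (and still modest) obstacle is verifying that the direct-sum decomposition of $E(\widetilde{\tau})$ survives in the continuous setting, i.e.\ the continuous analogue of~(\ref{eigenvectors}); once this is recorded, every remaining ingredient --- the lift formula, its uniqueness, and the complete isometry --- is inherited directly from the QMS statement by restricting to the $E(\tau)\oplus 0$ summand. All other steps are routine transcriptions of the unitization machinery developed in Section~3 for the discrete case.
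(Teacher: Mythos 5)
Your proposal is correct and follows exactly the route the paper intends: the paper states this theorem without proof, remarking only that one should ``mimic the discrete case'' via the unitization trick, and your argument carries that out by reducing the lift, its uniqueness, and the complete isometry to Lemma~\ref{strong formula} and Theorem~\ref{Main 4} applied to $(\widetilde{\mathcal{H}},\widetilde{\mathcal{A}},\widetilde{\tau})$. The one step you rightly flag --- the continuous analogue of~(\ref{eigenvectors}) --- is handled correctly by your second-coordinate and $\tau_t(x-c\cdot 1_{\mathcal{H}})=x-c\cdot 1_{\mathcal{H}}$ computations.
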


Defining $x\circ y= p\widehat{x}.\widehat{y}p,$ makes $\mathcal{P}(\tau )$ a $C^*$-algebra, called the peripheral Poisson boundary of $\tau .$ Whenever it is non-trivial, it is a unital $C^*$-algebra with its unit given by $\mbox{s}.\lim _{t\to \infty}\tau _t(1).$ Furthermore, on the peripheral Poisson boundary $(\mathcal{P}(\tau ), \circ )$, the semigroup $\tau $ is a semigroup of automorphisms. 

\section{UCP maps on $C^*$-algebras}

We want to show that the concept of peripheral Poisson boundary does not extend to the $C^*$-algebra setting. Firstly in certain examples the lifting of peripheral eigenvectors is not possible within the $C^*$-algebraic dilation, however they do exist by considering the associated von Neumann algebra. For these examples, it is possible to define the peripheral Poisson boundary by considering the larger dilation. In other examples, it is impossible to have any $C^*$-algebraic structure on the operator space obtained by the closed linear span of peripheral eigenvectors. 

\begin{eg}  Consider the Hilbert space $\mathcal{H}=l^2(\mathbb{Z}_+)$, with 
standard orthonormal basis $\{ e_n : n\in\mathbb{Z}_+\}$.
Let $S$ be the unilateral shift on $\mathcal{H}$, defined by $Se_n= e_{n+1}, ~n\in \mathbb{Z}_+$ and extended linearly to an isometry.
Let $\tau$ be defined on $\mathcal{B}(\mathcal{H})$, as $\tau (X)= S^*XS.$ Clearly, $\tau $ is a normal UCP map.  Its dilation can be easily written down. Let $\mathcal{K}= l^2(\mathbb{Z})$ be a Hilbert space with the standard orthonormal basis $\{e_n:n\in \mathbb{Z}\}.$ Clearly $\mathcal{H}$ is a subspace of $\mathcal{K}$ in a natural way. Let $U$ be the bilateral shift on $\mathcal{K}$, defined by $Ue_n=e_{n+1}, ~n\in \mathbb{Z}$, and extended linearly to an unitary. Then $\theta(Z)=U^*ZU$, is an automorphism of $\mathscr{B}(\mathcal{K})$ and it is pretty obvious that
$(\mathcal{K}, \mathscr{B}(\mathcal{K}), \theta )$ is the minimal dilation of $(\mathcal{H}, \mathscr{B}(\mathcal{H}), \tau)$ in the von Neumann
algebra setting.

However, if we consider the $C^*$-algebra setting, the picture is different. Decompose $\mathcal{K}$ as $\mathcal{K}=\mathcal{K}_1\oplus \mathcal{K}_2$,
where $\mathcal{K}_2=\mathcal{H}$ and $\mathcal{K}_1= (\mathcal{H})^\perp .$ Now any operator $Z$ on $\mathcal{K}$ decomposes as a $2\times 2$ block operator matrix:
$$Z=\left[\begin{array}{cc}Z_{11}& Z_{12}\\Z_{21}&Z_{22}\end{array}\right],$$
where $Z_{ij}\in \mathscr{B}(\mathcal{K}_j, \mathcal{K}_i),~~1\leq i, j\leq 2.$ Take 
$$\mathcal{A}= \{Z: Z\in \mathscr{B}(\mathcal(K)): Z_{11}, Z_{12}, Z_{21}~\mbox{are compact},~ Z_{22}\in \mathscr{B}(\mathcal{H})\}.$$
 Then $\mathcal{A}$ is a $C^*$-algebra left invariant by $\theta .$ Let $\psi $ be the restriction of $\theta $ to $\mathcal{A}.$ Now it is not hard to see that $(\mathcal{K}, \mathcal{A}, \psi )$ is the dilation of $(\mathcal{H}, \mathscr{B}(\mathcal{H}), \tau )$ in the $C^*$-algebra setting. 

The fixed points of $\tau $ are precisely the Toeplitz operators, and the peripheral eigenvectors of $\tau $ are known as $\lambda $-Toeplitz operators (see \cite{BKT1}, remarks before Theorem 3.3). In particular, the isometry $S$ itself is a fixed point of $\tau $. It lifts to $U$ in the von Neumann algebra setting. However, $U$ is not in the algebra $\mathcal{A}.$ This means that $S$ has no lift in the $C^*$-algebraic dilation. Nevertheless, we can define a Choi-Effross product and make $\mathcal{P}(\tau )$ a unital $C^*$-algebra, by considering the von Neumann algebraic dilation.
\end{eg}

\begin{eg}\label{Example2}
Take $\mathcal{H}=l^2(\mathbb{Z}_+ )$ as the Hilbert space, with standard orthonormal basis $\{e_n:n\in \mathbb{Z}_+\}$.  Let $\tau :\mathscr{B}(\mathcal{H})\to \mathscr{B}(\mathcal{H})$ be the  UCP map  defined by
   $$\tau(\begin{bmatrix}
x_{00}&x_{01}&x_{02}&x_{03}&x_{04}&..&..\\x_{10}&x_{11}&x_{12}&x_{13}&x_{14}&..&..\\x_{20}&x_{21}&x_{22}&x_{23}&x_{24}&..&..\\x_{30}&x_{31}&x_{32}&x_{33}&x_{34}&..&..\\x_{40}&x_{41}&x_{42}&x_{43}&x_{44}&..&..\\:&:&:&:&:&:&:
   \end{bmatrix})=\begin{bmatrix}
       x_{00}&x_{01}&0&0&0&..&..\\x_{10}&x_{11}&0&0&0&..&..\\0&0&x_{00}&0&0&..&..\\0&0&0&x_{11}&0&..&..\\0&0&0&0&x_{22}&..&..\\:&:&:&:&:&:&:
   \end{bmatrix}.$$
The peripheral eigenvectors of $\tau $ are all fixed points and the fixed point space is spanned by $E_{01}$, $E_{10}$, $P_0$, and $P_1$, where $E_{ij}=|e_i\rangle \langle e_j|$ for all $i,j$, and $P_0, P_1$ are projections onto $\overline{\mbox{span}}\{e_j: j ~\mbox{is even}\}$ and $\overline{\mbox{span}}\{e_j: j~\mbox{is odd}\}$ respectively. The extended Choi-Effros product makes these eigenvectors as new matrix units satisfying $E_{01}\circ E_{10}=P_0$ and $E_{10}\circ E_{01}=P_1$. In particular, the peripheral Poisson boundary is isomorphic to the $2\times 2$ matrix algebra $\mathbb{M} _2.$ 

Now consider the $C^*$-algebra $\mathcal{A}:= \{ cI+K: c\in \mathbb{C}, K\in \mathscr{K}\}$ of compact perturbations of scalars. Observe that $\tau $ leaves $\mathcal{A}$ invariant. Let $\tau _0 $ be the restriction of $\tau$ to $\mathcal{A}$. We look at the triple $(\mathcal{H}, \mathcal{A}, \tau _0) $. 
The peripheral eigenspace of $\tau _0$ is spanned by only three vectors
$E_{01}, E_{10}$ and $ I$, as $P_0$ and  $P_1$ are individually not in $\mathcal{A}.$
We wish to show that the operator system $E(\tau _0)=~ \mbox{span}\{E_{01}, E_{10}, I\}$ does not admit any $C^*$-algebra structure (retaining same operator system structure). 
Now it is convenient to have the following general definition.

\begin{defn}
 Let $E$ be an operator system. Then $A\in E$ is said to be {\em pure\/} if $A\geq 0$, and for all $B$ in $E$ with $0 \leq B \leq A$, satisfy
$B=qA$ for some scalar $q\geq 0$.    
\end{defn}

Note that the space $E(\tau _0)$ is $3$- dimensional, and so the only possibility for it to be a $C^*$-algebra is  that it is isomorphic to $\mathbb{C}^3$.
Clearly in $\mathbb{C}^3,$ the only pure vectors of unit norm are standard basis vectors. So there are exactly three of them. 

In $E(\tau _0)$, if $A=aI+bE_{01}+cE_{10}$, then
$A^*=\overline{a}I+\overline{b}E_{10}+\overline{c}E_{01}$ and
$\|A\|=\|\tilde{A}\|$, where  $$\tilde{A}:=\begin{bmatrix}
       a&b\\c&a
   \end{bmatrix} \in \mathbb{M}_2.$$
It is easy to see that the set of unit norm pure vectors of
$E_1(\tau_0)$ is given by,
$$\{ \frac{1}{2}(I+\exp^{i\theta}E_{01}+\exp^{-i\theta}E_{10}): \theta
\in [0,2\pi)\}.$$ As this set is infinite, the space  $E(\tau _0)$ cannot have a $C^*$-algebra structure making it isomorphic to $\mathbb{C}^3$.
\end{eg}

Here are two continuous time, that is, one parameter semigroup examples.

\begin{eg} 
    Consider the unital $C^*$-algebra $\mathcal{A}=C_c(\mathbb{R}_+)$, of continuous functions on $\mathbb{R}_+$ that have a limit as the variable goes to infinity,  with the unital quantum dynamical semigroup, $\tau=\{\tau_t: t\in \mathbb{R}_+\}$ defined as the shift $\tau_t (f)(x)=f(x+t), \ x\in \mathbb{R}_+$. The identity function on $\mathbb{R}_+$ is a peripheral eigenvector. It turns out that there is no lift for this eigenvector in the dilation of $\tau$. The minimal dilation of $\tau$ is the  shift $\theta _t(f)(x)= f(x+t), ~~x\in \mathbb{R},$
    on the non-unital $C^*$-algebra:
    $$\{ g\in C(\mathbb{R}): \lim _{x\to -\infty}g(x)=0, \lim _{x\to \infty}g(x)~\mbox{exists} \},$$ where $C(\mathbb{R})$ denotes the space of continuous functions on $\mathbb{R.}$
      The natural candidate for the lift would have been the identity function on $\mathbb{R}.$ But it is not an element of this algebra.
\end{eg}

\begin{eg}
Consider the von Neumann algebra $\mathcal{A}=\mathbb{M}_2 \oplus L^\infty (-\infty,\infty), $ and the QDS $\tau=\{\tau _t:t\in \mathbb{R}_+\}$ defined by, $$\tau_t(\begin{bmatrix}
        x_{11}&x_{12}\\x_{21}&x_{22}
    \end{bmatrix}\oplus f)=\begin{bmatrix}
        x_{11}&x_{12}\\x_{21}&x_{22}
    \end{bmatrix}\oplus \tilde{f},~\mbox{ where}~ \tilde{f}(x)=\begin{cases}
    f(x+s) & \quad x \in (-\infty,-s); \\
    x_{11} & \quad x \in [-s,0);\\
        x_{22} & \quad x \in [0,s); \\
        f(x-s) & \quad x \in [s,\infty).
    \end{cases}$$ The peripheral eigenvectors of $\tau$ are the  fixed points and they are given by: 
    $$\{\begin{bmatrix}
x_{11}&x_{12}\\x_{21}&x_{22}
    \end{bmatrix}\oplus (x_{11}1_{(-\infty,0)}+x_{22}1_{[0,\infty)})  : x_{11}, x_{12}, x_{21}, x_{22}\in \mathbb{C}\},   $$
where $1$ indicates the indicator function on the relevant interval.    
    
Now, consider the $C^*$-algebra $\mathcal{A}_0\subseteq \mathcal{A}$ given by $\mathcal{A}_0=\mathbb{M}_2 \oplus (\mathbb{C}1+\mathcal{K}_{(-\infty,\infty)} )$, where $K_{(-\infty,\infty)}$ is the $C^*$-algebra generated by compactly supported bounded measurable functions on $(-\infty , \infty ).$  Clearly $\tau $ leaves $\mathcal{A}_0$ invariant. Restrict $\tau$ to $\mathcal{A}_0$. The peripheral eigenvectors for $\tau$ restricted to $\mathcal{A}_0$ are spanned by $$\begin{bmatrix}
        1&0\\0&1
    \end{bmatrix}\oplus 1, \ \begin{bmatrix}
0&1\\0&0
    \end{bmatrix}\oplus 0, \ \mbox{and} \  \begin{bmatrix}
0&0\\1&0
    \end{bmatrix}\oplus 0.$$  It is impossible to induce a product on this operator space,  $\mathcal{P}(\tau )$  to make it a $C^*$-algebra for the same reason as in Example \ref{Example2}.
    
\end{eg}

In earlier sections, we have seen that for a normal unital completely positive map on a von Neumann algebra, using the extended Choi-Effros product we can induce a $C^*$- algebra structure on the operator system $\mathcal{P}(\tau )$. If the complete positivity condition is dropped and just positivity is kept, one may not be able to induce any such product.
We illustrate this with the following example.

\begin{eg}
Let $\tau: \mathbb{M}_2 \to \mathbb{M}_2$ be a unital positive map, defined as:
$$\tau(\begin{bmatrix}
        x_{11}&x_{12}\\x_{21}&x_{22}
    \end{bmatrix})=\begin{bmatrix}
        \frac{x_{11}+x_{22}}{2}&x_{21}\\x_{12}&\frac{x_{11}+x_{22}}{2}
    \end{bmatrix}.$$ The peripheral eigenvalues are $1$ and $-1$, with the corresponding eigenspaces:
    $$E_1(\tau )= ~\mbox{span}~\{ \begin{bmatrix}
        1&0\\0&1
    \end{bmatrix}, \begin{bmatrix}
        0&1\\1&0
    \end{bmatrix}\}; ~~E_{-1}(\tau )= \mbox{span}~\{ \begin{bmatrix} 0&-1\\1&0 \end{bmatrix}\}.$$
Clearly, if $\mathcal{P}(\tau)$ is a $C^*$-algebra, it can only be isomorphic to $\mathbb{C}^3$, but as an operator system, it is not isomorphic to $\mathbb{C}^3.$  (One may refer to Example \ref{Example2} for possible arguments).     
\end{eg}

Finally, here is an example to show that it may not be possible to extend the theory of peripheral Poisson boundary to non-contractive CP maps. 

\begin{eg}  Take the von Neumann algebra as $\mathcal{A}=\mathbb{M}_2$ and the CP map $\tau$ defined as
   $$\tau(\begin{bmatrix}
        x_{11}&x_{12}\\x_{21}&x_{22}
    \end{bmatrix})=\begin{bmatrix}
        2x_{11}&-x_{12}\\-x_{21}&2x_{22}
    \end{bmatrix}.$$ 

    Clearly the only peripheral eigenvalue is -1, and all non-trivial matrices with zeros on diagonals are eigenvectors.  So $\mathcal{P}(\tau )$ is spanned by matrix units $E_{12}$ and $E_{21}.$ It is impossible to induce a product on this operator space to make it a $C^*$-algebra.
    To see this notice, if there exists a product, say $\circ$, then $\mathcal{P}(\tau)\cong  \mathbb{C}^2$. 
  The operator space $\mathbb{C}^2$ has two self adjoint elements $e_1, e_2$ satisfying $\|e_1\|=\|e_2\|=\|e_1+e_2\|=\|e_1-e_2\|=1.$ It is not hard to see that $P(\tau )$ does not admit such a pair.
\end{eg}

\section*{Acknowledgments}
Bhat gratefully acknowledges funding from  ANRF  (India) through J C
Bose Fellowship No. JBR/2021/000024.
Astrid is thankful to Indian Statistical Institute Bangalore for the kind hospitality during her visits.

\end{document}